\documentclass[12pt]{amsart}
%
%                       Stems and Spectral Sequences
%                   Hans-Joachim Baues & David Blanc 
%           
%                               April 1, 2010 
%                    Revised after referee's report - August 9, 2010
%
\usepackage{amsmath,amsthm}
\usepackage{amssymb}
\usepackage[all]{xypic}
\usepackage{euscript}
\setlength{\topmargin}{0mm}
\setlength{\oddsidemargin}{5mm}
\setlength{\evensidemargin}{5mm}
\setlength{\textwidth}{6.0in}
\setlength{\textheight}{8.6in}
\begin{document} 
%
% NEW types of ``theorems''
%
\theoremstyle{plain}
\swapnumbers
	\newtheorem{thm}{Theorem}[section]
	\newtheorem{prop}[thm]{Proposition}
	\newtheorem{lemma}[thm]{Lemma}
	\newtheorem{cor}[thm]{Corollary}
	\newtheorem{fact}[thm]{Fact}
	\newtheorem{subsec}[thm]{}
\theoremstyle{definition}
	\newtheorem{assume}[thm]{Assumption}
	\newtheorem{defn}[thm]{Definition}
	\newtheorem{example}[thm]{Example}
	\newtheorem{examples}[thm]{Examples}
	\newtheorem{claim}[thm]{Claim}
	\newtheorem{notn}[thm]{Notation}
	\newtheorem{construct}[thm]{Construction}
\theoremstyle{remark}
        \newtheorem{remark}[thm]{Remark}
        \newtheorem{remarks}[thm]{Remarks}
	\newtheorem{ack}[thm]{Acknowledgements}
\newenvironment{myeq}[1][]
{\stepcounter{thm}\begin{equation}\tag{\thethm}{#1}}
{\end{equation}}
\newcommand{\mydiag}[2][]{\myeq[#1]{\xymatrix{#2}}}
\newcommand{\mydiagram}[2][]
{\stepcounter{thm}\begin{equation}
     \tag{\thethm}{#1}\vcenter{\xymatrix{#2}}\end{equation}}
\newcommand{\mydiagr}[2][]
{\stepcounter{thm}\begin{equation}
     \tag{\thethm}{#1}\vcenter{\xymatrix@R=15pt{#2}}\end{equation}}
%use:  \mydiagram[\label{``label''}]{``xy-pic syntax''}
%
\newcommand{\mypict}[2][]
{\stepcounter{thm}\begin{equation}
     \tag{\thethm}{#1}\vcenter{\begin{picture}{#2}\end{picture}}\end{equation}}
\newenvironment{mysubsection}[2][]
{\begin{subsec}\begin{upshape}\begin{bfseries}{#2.}
\end{bfseries}{#1}}
{\end{upshape}\end{subsec}}
\newenvironment{mysubsect}[2][]
{\begin{subsec}\begin{upshape}\begin{bfseries}{#2\vsm.}
\end{bfseries}{#1}}
{\end{upshape}\end{subsec}}
\newcommand{\sect}{\setcounter{thm}{0}\section}
\newcommand{\wh}{\ -- \ }
\newcommand{\w}[2][ ]{\ \ensuremath{#2}{#1}\ }
\newcommand{\ww}[1]{\ \ensuremath{#1}}
\newcommand{\wb}[2][ ]{\ (\ensuremath{#2}){#1}\ }
\newcommand{\wref}[2][ ]{\ \eqref{#2}{#1}\ }
%
%    arrows, symbols, and spaces:
%
\newcommand{\xra}[1]{\xrightarrow{#1}}
\newcommand{\xla}[1]{\xleftarrow{#1}}
\newcommand{\lar}{\leftarrow}
\newcommand{\Ra}{\Rightarrow}
\newcommand{\hra}{\hookrightarrow}
\newcommand{\lora}{\longrightarrow}
\newcommand{\bstar}{\mbox{\large $\star$}}
\newcommand{\adj}[2]{\substack{{#1}\\ \rightleftharpoons \\ {#2}}}
\newcommand{\hsq}{\hspace*{13 mm}}
\newcommand{\hsp}{\hspace*{10 mm}}
\newcommand{\hs}{\hspace*{5 mm}}
\newcommand{\hsm}{\hspace*{2 mm}}
\newcommand{\hsn}{\hspace*{-20 mm}}
\newcommand{\hsnn}{\hspace*{-27 mm}}
\newcommand{\vsn}{\vspace{-30 mm}}
\newcommand{\vsp}{\vspace{10 mm}}
\newcommand{\vs}{\vspace{7 mm}}
\newcommand{\vsm}{\vspace{2 mm}}
\newcommand{\rest}[1]{\lvert_{#1}}
\newcommand{\lra}[1]{\langle{#1}\rangle}
\newcommand{\EQUIV}{\Leftrightarrow}
\newcommand{\epic}{\to\hspace{-5 mm}\to}
\newcommand{\xepic}[1]{\xrightarrow{#1}\hspace{-5 mm}\to}
\newcommand{\hotimes}{\hat{\otimes}}
\newcommand{\hy}[2]{{#1}\text{-}{#2}}
\newcommand{\vare}{\varepsilon}
\newcommand{\vart}{\vartheta}
\newcommand{\hvart}{\hat{\vart}}
%
%       operators:
%
\newcommand{\ab}{\operatorname{ab}}
\newcommand{\Aut}{\operatorname{Aut}}
\newcommand{\BW}{\operatorname{BW}}
\newcommand{\Cok}{\operatorname{Coker}\,}
\newcommand{\colim}{\operatorname{colim}}
\newcommand{\Cotor}{\operatorname{Cotor}}
\newcommand{\Ext}{\operatorname{Ext}}
\newcommand{\gr}{\operatorname{gr}}
\newcommand{\ho}{\operatorname{ho}}
\newcommand{\Hom}{\operatorname{Hom}}
\newcommand{\Id}{\operatorname{Id}}
\newcommand{\tId}{\widetilde{Id}}
\newcommand{\Image}{\operatorname{Im}\,}
\newcommand{\inc}{\operatorname{inc}}
\newcommand{\Ker}{\operatorname{Ker}\,}
\newcommand{\map}{\operatorname{map}}
\newcommand{\mapa}{\map_{\ast}}
\newcommand{\mape}[2]{\map\sp{#1}\sb{#2}}
\newcommand{\Obj}{\operatorname{Obj}}
\newcommand{\op}{\sp{\operatorname{op}}}
\newcommand{\real}{\operatorname{real}}
\newcommand{\res}{\operatorname{res}}
\newcommand{\sk}[1]{\operatorname{sk}_{#1}}
\newcommand{\Tor}{\operatorname{Tor}}
\newcommand{\Tot}{\operatorname{Tot}}
\newcommand{\we}{\operatorname{w.e.}}
\newcommand{\sss}{\hspace*{1 mm}\sp{s}}
\newcommand{\vvv}{\hspace*{1 mm}\sp{v}}
%
%     Caligraphic
%
\newcommand{\A}{{\EuScript A}}
\newcommand{\hA}{\hat{\A}}
\newcommand{\tA}{\tilde{\A}}
\newcommand{\cA}{{\mathcal A}}
\newcommand{\Ap}{\cA_{p}}
\newcommand{\B}{{\EuScript B}}
\newcommand{\cB}{{\mathcal B}}
\newcommand{\hB}{\hat{\B}}
\newcommand{\AB}{(\A,\B)}
\newcommand{\tB}{\tilde{\B}}
\newcommand{\C}{{\mathcal C}}
\newcommand{\hC}{\hat{\C}}
\newcommand{\tC}{\tilde{\C}}
\newcommand{\Tow}[1]{\hspace*{-0.5mm}\sp{\#}{#1}}
\newcommand{\Ct}{\Tow{\C}}
\newcommand{\Ctl}[1]{\hspace*{-1mm}\sb{{#1}\leq}\sp{\hspace*{1.7mm}\#}
                 \hspace{0.5mm}\C}
\newcommand{\Ctg}[1]{\hspace*{-1mm}\sb{{#1}\geq}\sp{\hspace*{1.7mm}\#}
                  \hspace{0.5mm}\C}
\newcommand{\D}{{\mathcal D}}
\newcommand{\Dpp}{\D\sp{+}}
\newcommand{\hD}{\hat{\D}}
\newcommand{\E}{{\mathcal E}}
\newcommand{\tE}{\tilde{\E}}
\newcommand{\F}{{\mathcal F}}
\newcommand{\tF}{\tilde{F}}
\newcommand{\FA}{\F_{\A}}
\newcommand{\G}{{\mathcal G}}
\newcommand{\HH}{{\mathcal H}}
\newcommand{\K}{{\mathcal K}}
\newcommand{\LL}{{\mathcal L}}
\newcommand{\M}{{\mathcal M}}
\newcommand{\N}{{\mathcal N}}
\newcommand{\OO}{{\mathcal O}}
\newcommand{\Op}{\OO\sp{+}}
\newcommand{\Om}{\OO\sp{-}}
\newcommand{\PP}{{\mathcal P}}
\newcommand{\bP}[1]{\overline{\PP}[{#1}]}
\newcommand{\QQ}{{\mathcal Q}}
\newcommand{\eS}[1]{{\EuScript S}\lra{#1}}
\newcommand{\Stem}{{\EuScript Stem}}
\newcommand{\Pstem}{{\EuScript P\!stem}}
\newcommand{\Ss}{{\mathcal S}}
\newcommand{\U}{{\mathcal U}}
\newcommand{\V}{{\mathcal V}}
\newcommand{\X}{{\mathcal X}}
%
%      Towers
%
\newcommand{\uA}[1]{\hspace*{0.5mm}\sp{#1}\hspace{-0.3mm}A}
\newcommand{\up}[1]{\hspace*{0.5mm}\sp{#1}\hspace{-0.3mm}p}
\newcommand{\sq}[1]{\hspace*{0.5mm}\sb{#1}\hspace{-0.3mm}q}
\newcommand{\suq}[2]{\hspace*{0.5mm}\sb{#1}\sp{#2}\hspace{-0.3mm}q}
\newcommand{\tk}[2]{\tau_{#1}[{#2}]}
\newcommand{\Ok}[3]{{#1}_{#2}[{#3}]}
\newcommand{\wk}[2]{{#1}\lra{#2}}
\newcommand{\uOc}[3]{\hspace*{0.5mm}\sp{#2}\hspace{-0.3mm}\wk{{#1}}{#3}}
\newcommand{\sOk}[3]{\hspace*{0.3mm}\sb{#2}\hspace{0.2mm}{#1}[{#3}]}
\newcommand{\suOk}[4]{\hspace*{0.5mm}\sb{#2}\sp{#3}\hspace{-0.3mm}{#1}[{#4}]}
\newcommand{\Ak}[2]{\Ok{A}{#1}{#2}}
\newcommand{\uAc}[2]{\uOc{A}{#1}{#2}}
\newcommand{\sAk}[2]{\sOk{A}{#1}{#2}}
\newcommand{\suAk}[3]{\suOk{A}{#1}{#2}{#3}}
%
%           Postnikov systems and stems
%
\newcommand{\Po}[1]{P\sp{#1}}
\newcommand{\uP}[1]{\hspace*{0.5mm}\sp{#1}\hspace{-0.3mm}\PP}
\newcommand{\Pk}[2]{\PP\sb{#1}[{#2}]}
\newcommand{\Pnk}[3]{\Po{#1}\wk{#2}{#3}}
\newcommand{\Pdnk}[2]{\Pd\sp{#1}\lra{#2}}
\newcommand{\Qnk}[2]{Q\sp{#1}\sb{#2}}
\newcommand{\Qk}[1]{Q\sb{#1}}
\newcommand{\Qdnk}[2]{\Qd\sp{#1}\lra{#2}}

\newcommand{\Xk}[1]{\sb{#1}X}
\newcommand{\PPk}[3]{(P[{#2}]{#3})\sb{#1}}
%
%    Names of categories and functors:
%
\newcommand{\Alg}[1]{{#1}\text{-}{\EuScript Alg}}
\newcommand{\tAlg}[1]{{#1}\text{-}{\widetilde{\EuScript Alg}}}
\newcommand{\Ab}{{\EuScript Ab}}
\newcommand{\Abgp}{{\Ab\Gp}}
\newcommand{\sC}{\sss\C}
\newcommand{\vC}{\vvv\C}
\newcommand{\CA}{\C\sb{\A}}
\newcommand{\tCA}{\tilde{\C}\sb{\A}}
\newcommand{\sCA}{\sp{s}\CA}
\newcommand{\hCA}{\sss\tCA}
\newcommand{\vCA}{\vvv\tCA}
\newcommand{\CuA}{\C\sp{\A}}
\newcommand{\tCuA}{\tilde{\C}\sp{\A}}
\newcommand{\vCuA}{\vvv\tCuA}
\newcommand{\sCuA}{\sp{s}\CuA}
\newcommand{\hCuA}{\sss\tCuA}
\newcommand{\Cat}{{\EuScript Cat}}
\newcommand{\Ch}{{\EuScript Chain}}
\newcommand{\DB}{\D_{\B}}
\newcommand{\tDB}{\tilde{\D}_{\B}}
\newcommand{\sDB}{\sp{s}\DB}
\newcommand{\hDB}{\sss\tDB}
\newcommand{\Fs}{\F_{s}}
\newcommand{\Fn}{\{K(\Fp,n)\}_{n=1}^{\infty}}
\newcommand{\Gp}{{\EuScript Gp}}
\newcommand{\Gpd}{{\EuScript Gpd}}
\newcommand{\Hopf}{{\EuScript Hopf}}
\newcommand{\MA}{\M\sb{\A}}
\newcommand{\MVA}{\M\sb{\A}\sp{\V}}
\newcommand{\MAC}{\MA\sp{\tCA}}
\newcommand{\MuA}{\M\sp{\A}}
\newcommand{\MuAC}{\MuA\sb{\tCuA}}
\newcommand{\MuVA}{\MuA\sb{\V}}
\newcommand{\MB}{\M\sb{\B}}
\newcommand{\MBD}{\MB\sp{\tDB}}
\newcommand{\R}[1]{{\mathcal R}_{#1}}
\newcommand{\RM}[1]{{#1}\text{-}{\EuScript Mod}}
\newcommand{\Sn}{\{S^{n}\}_{n=1}^{\infty}}
\newcommand{\Snz}{\{S^{n}\}_{n=0}^{\infty}}
\newcommand{\Sa}{\Ss_{\ast}}
\newcommand{\Set}{{\EuScript Set}}
\newcommand{\Seta}{\Set_{\ast}}
\newcommand{\Track}{{\EuScript Trk}}
\newcommand{\TT}{{\mathcal T}}
\newcommand{\TM}{{\EuScript TM}}
\newcommand{\Ta}{\TT_{\ast}}
\newcommand{\Tat}{\hspace*{-0.5mm}\sp{\#}\Ta}
\newcommand{\VA}{\V\sb{\A}}
%
%       \SO categories
%
\newcommand{\ihD}[1]{({#1},\hD)}
\newcommand{\iO}[1]{({#1},\OO)}
\newcommand{\iOm}[1]{({#1},\Om)}
\newcommand{\iOp}[1]{({#1},\Op)}
\newcommand{\DO}{\iO{\D}}
\newcommand{\SO}{\iO{\Ss}}
\newcommand{\SD}{(\S,\D)}
\newcommand{\SaO}{\iO{\Sa}}
\newcommand{\SaOm}{\iOm{\Sa}}
\newcommand{\SaOp}{\iOp{\Sa}}
\newcommand{\OC}{\hy{\OO}{\Cat}}
\newcommand{\OmC}{\hy{\Om}{\Cat}}
\newcommand{\OpC}{\hy{\Op}{\Cat}}
\newcommand{\iOC}[1]{\hy{\iO{#1}}{\Cat}}
\newcommand{\iOmC}[1]{\hy{\iOm{#1}}{\Cat}}
\newcommand{\iOpC}[1]{\hy{\iOp{#1}}{\Cat}}
\newcommand{\DOC}{\iOC{\D}}
\newcommand{\SOC}{\iOC{\Ss}}
\newcommand{\SaOC}{\iOC{\Sa}}
\newcommand{\SaOmC}{\iOmC{\Sa}}
\newcommand{\SaOpC}{\iOpC{\Sa}}
\newcommand{\VO}{\iO{\V}}
\newcommand{\VOm}{\iOm{\V}}
\newcommand{\VOp}{\iOp{\V}}
\newcommand{\VC}{\hy{\V}{\Cat}}
\newcommand{\VOC}{\iOC{\V}}
\newcommand{\VOmC}{\iOmC{\V}}
\newcommand{\VOpC}{\iOpC{\V}}
%
%     Gothic
%
\newcommand{\We}{\mathfrak{W}}
\newcommand{\fG}{\mathfrak{G}}
%
%     B bold face
%
\newcommand{\bDel}{\mathbf{\Delta}}
\newcommand{\bF}{\mathbb F}
\newcommand{\Fp}{\bF_{p}}
\newcommand{\bL}{\mathbb L}
\newcommand{\tL}{\tilde{\bL}}
\newcommand{\bN}{\mathbb N}
\newcommand{\bQ}{\mathbb Q}
\newcommand{\bZ}{\mathbb Z}
\newcommand{\bz}{\mathbf 0}
\newcommand{\bo}{\mathbf 1}
\newcommand{\bt}{\mathbf 2}
\newcommand{\bn}{\mathbf n}
%
%       Spaces and spherical objects
%
\newcommand{\bS}[1]{{\mathbf S}^{#1}}
\newcommand{\bSp}[2]{\bS{#1}_{({#2})}}
\newcommand{\be}[1]{{\mathbf e}^{#1}}
\newcommand{\gS}[1]{{\EuScript S}^{#1}}
\newcommand{\hT}{\hat{T}}
%
%    homotopy groups
%
\newcommand{\pis}{\pi_{\ast}}
\newcommand{\pisp}{\pi_{\ast+1}}
\newcommand{\hpi}{\hat{\pi}_{1}}
\newcommand{\pin}{\pi^{\natural}}
\newcommand{\pinat}[1]{\pin_{#1}}
\newcommand{\pn}{\pi_{\natural}}
\newcommand{\pun}[1]{\pi_{\natural\,{#1}}}
\newcommand{\punat}[2]{\pun{#1}^{#2}}
\newcommand{\pinatt}[2]{\pi_{#1}^{\natural\,({#2})}}
\newcommand{\punatt}[2]{\pi^{#1}_{\natural\,({#2})}}
\newcommand{\pinA}[2]{\pi^{({#1})}_{#2}}
\newcommand{\piA}{\pi^{\A}}
\newcommand{\pinB}[1]{\pinA{\B}{#1}}
\newcommand{\piB}{\pinB{}}
%
%            Pi-algebras
%
\newcommand{\ma}{mapping algebra}
\newcommand{\Ama}{$\A$-\ma}
\newcommand{\Bma}{$\B$-\ma}
\newcommand{\VAma}{\ww{\VA}-\ma}
\newcommand{\Et}[1]{$E^{#1}$-term}
\newcommand{\Ett}{\Et{2}\ }
\newcommand{\Pa}{$\Pi$-algebra}
\newcommand{\PiA}{\Pi_{\A}}
\newcommand{\PuA}{\Pi^{\A}}
\newcommand{\PiB}{\Pi_{\B}}
\newcommand{\PihA}{\Pi_{\hA}}
\newcommand{\PiC}{\Pi_{\C}}
\newcommand{\PiD}{\Pi_{\D}}
\newcommand{\PAa}{$\PiA$-algebra}
\newcommand{\PuAa}{$\PuA$-algebra}
\newcommand{\sPAa}{secondary \ww{\PiA}-algebra}
\newcommand{\PBa}{$\PiB$-algebra}
\newcommand{\PAlg}{\Alg{\Pi}}
\newcommand{\PAAlg}{\Alg{\PiA}}
\newcommand{\PuAAlg}{\Alg{\PuA}}
\newcommand{\PBAlg}{\Alg{\PiB}}
\newcommand{\TAlg}{\Alg{\Theta}}
%
%          named simplicial objects
%
\newcommand{\BL}{B\Lambda}
\newcommand{\EK}[4]{E\sp{{#1}}\sb{#2}({#3},{#4})}
\newcommand{\EL}[2]{\EK{\Lambda}{}{#1}{#2}}
\newcommand{\ECL}[2]{\EK{\Lambda}{\C}{#1}{#2}}
\newcommand{\EDL}[2]{\EK{\Lambda}{\D}{#1}{#2}}
\newcommand{\EC}[2]{\EK{}{\C}{#1}{#2}}
\newcommand{\ts}{\tilde{s}}
\newcommand{\tDe}{\tilde{\Delta}}
%
%           simplicial objects
%
\newcommand{\Au}{A^{\bullet}}
\newcommand{\As}{A_{\ast}}
\newcommand{\Bd}{B_{\bullet}}
\newcommand{\Cu}{C^{\bullet}}
\newcommand{\Ed}{\E_{\bullet}}
\newcommand{\Fu}{F^{\bullet}}
\newcommand{\Gd}{G_{\bullet}}
\newcommand{\Md}{\M_{\bullet}}
\newcommand{\Nd}{\N_{\bullet}}
\newcommand{\Pd}{\PP_{\bullet}}
\newcommand{\Pu}{\PP\sp{\bullet}}
\newcommand{\Qd}{\QQ_{\bullet}}
\newcommand{\Ru}{R\sp{\bullet}}
\newcommand{\Vd}{V_{\bullet}}
\newcommand{\Wd}{W_{\bullet}}
\newcommand{\Wu}{W^{\bullet}}
\newcommand{\Xd}{X_{\bullet}}
\newcommand{\Xu}{X^{\bullet}}
\newcommand{\Yd}{Y_{\bullet}}
\newcommand{\Yid}[1]{Y_{\bullet}\q{#1}}
\newcommand{\Zd}{Z_{\bullet}}
%
%              Graded objects
%
\newcommand{\Tss}{T_{\ast\ast}}
%
%            maps in model categories
%
\newcommand{\bd}{\mathbf{d}_{0}}
\newcommand{\co}[1]{c({#1})_{\bullet}}
\newcommand{\cu}[1]{c({#1})^{\bullet}}
\newcommand{\fk}[1]{f\sb{#1}}
\newcommand{\q}[1]{^{({#1})}}
\newcommand{\qk}[1]{q_{#1}}
\newcommand{\li}[1]{_{({#1})}}
%
%         Mapping algebras
%
\newcommand{\fff}{\mathfrak{f}}
\newcommand{\fg}{\mathfrak{g}}
\newcommand{\fM}{\mathfrak{M}}
\newcommand{\fMA}{\fM_{\A}}
\newcommand{\fuMA}{\fM^{\A}}
\newcommand{\fMB}{\fM_{\B}}
\newcommand{\fPi}{\mathbf{\Pi}}
\newcommand{\fPA}[2]{\fPi_{\A}({#1})_{#2}}
\newcommand{\fPAd}[1]{\fPA{#1}{\bullet}}
\newcommand{\fV}{\mathfrak{V}}
\newcommand{\fVd}{\fV_{\bullet}}
\newcommand{\fVu}{\fV^{\bullet}}
\newcommand{\fW}{\mathfrak{W}}
\newcommand{\fWd}{\fW_{\bullet}}
\newcommand{\fX}{\mathfrak{X}}
\newcommand{\fY}{\mathfrak{Y}}
\newcommand{\fZ}{\mathfrak{Z}}
\setcounter{section}{-1}
%
%    Heading:
%
\title{Stems and spectral sequences}
\date{April 1, 2010; revised August 9, 2010}
\author{Hans-Joachim Baues}
\address{Max-Planck-Institut f\"{u}r Mathematik\\ 
Vivatsgasse 7\\ 53111 Bonn, Germany}
\email{baues@mpim-bonn.mpg.de}
\author{David Blanc}
\address{Department of Mathematics\\ University of Haifa\\ 31905 Haifa, Israel}
\email{blanc@math.haifa.ac.il}
\subjclass{Primary: \ 55T05. \ Secondary: \ 18G55, 18G10, 55S45}
\keywords{$n$-stem, Postnikov system, spectral sequence, mapping algebra,
  higher derived functors, spiral long exact sequence}
\begin{abstract}
We introduce the category \w{\Pstem[n]} of $n$-stems, with a functor
\w{\PP[n]} from spaces to \w[.]{\Pstem[n]} 
This can be thought of as the $n$-th order homotopy groups of a space.
We show how to associate to each simplicial $n$-stem
\w{\Qd} an \ww{(n+1)}-truncated spectral sequence. Moreover, if
\w{\Qd=\PP[n]\Xd} is the Postnikov $n$-stem of
a simplicial space \w[,]{\Xd} the truncated spectral sequence for
\w{\Qd} is the truncation of the usual homotopy spectral
sequence of \w[.]{\Xd} Similar results are also proven for cosimplicial
$n$-stems. They are helpful for computations, since $n$-stems
in low degrees have good algebraic models.
\end{abstract}
\maketitle

%
%c0      Introduction
%
\sect{Introduction}

Many of the spectral sequences of algebraic topology arise as the
homotopy spectral sequence of a (co)simplicial space \wh including
the spectral sequence of a double complex, the (stable or
unstable) Adams spectral sequence, the Eilenberg-Moore spectral
sequence, and so on (see \S \ref{egert}). 
Given a simplicial space \w[,]{\Xd} the \Ett of its homotopy spectral
sequence has the form \w[,]{E^{2}_{s,t}=\pi_{s}\pi_{t}\Xd} so it may
be computed by applying the homotopy group functor dimensionwise to 
\w[.]{\Xd} 

In this paper we show that the higher terms of this spectral sequence are 
obtained analogously by applying 'higher homotopy group' functors to
\w[.]{\Xd} These functors are given explicitly in the form of certain
\emph{Postnikov stems}, defined in Section \ref{cpstem}; the  
Postnikov $0$-stem of a space is equivalent to its homotopy groups.

We then show how the \Et{r}\ of the homotopy spectral sequence of a
simplicial space \w{\Xd} can be described in terms of the
\ww{(r-2)}-Postnikov stem of \w[,]{\Xd} for each \w{r\geq 2} (see
Theorem \ref{tert}) \wh and similarly for the homotopy spectral
sequence of a cosimplicial space \w{\Xu} (see Theorem \ref{tcert}). 

As an application for the present paper, in \cite{BBlaH} we generalize 
the first author's result with Mamuka Jibladze (in \cite{BJiblSD}),
which shows that the \Et{3}\ of the stable Adams spectral sequence can
be identified as a certain secondary derived functor\w[.]{\Ext}
We do this by showing how to define in general the \emph{higher order
  derived functors} of a continuous functor \w[,]{F:\C\to\Ta} by
applying $F$ to a simplicial resolution \w{\Wd} in $\C$, and taking
Postnikov $n$-stems of \w[.]{F\Wd}

\begin{mysubsection}[\label{snac}]{Notation and conventions}
The category of pointed connected topological spaces will be denoted
by \w[;]{\Ta} that of pointed sets by \w[;]{\Seta} that of groups by
\w[.]{\Gp} For any category $\C$, \w{s\C} denotes the category of
simplicial objects over $\C$, and \w{c\C} that of cosimplicial objects
over $\C$. However, we abbreviate \w{s\Set} to \w[,]{\Ss} \w{s\Seta}
to \w[,]{\Sa} and \w{s\Gp} to $\G$. The constant (co)simplicial object
on an object \w{X\in\C} is written \w{\co{X}\in s\C} 
(respectively, \w[).]{\cu{X}\in c\C} For any small indexing category
$I$, the category of functors \w{I\to\C} is denoted by \w[.]{\C^{I}}
\end{mysubsection}

\begin{ack}
We wishe to thank the referee for his or her careful reading of the
paper and helpful comments on it.
\end{ack}

%
%c1   Postnikov stems
%
\sect{Postnikov stems}
\label{cpstem}

The Postnikov system of a topological space (or simplicial set) $X$ is
the tower of fibrations:
\begin{myeq}[\label{eqpostow}]
\dotsc~\to~\Po{n+1}X~\xra{p^{n+1}}~\Po{n}X~\xra{p^{n}}~
\Po{n-1}X~\dotsc\Po{1}X~\xra{p^{1}}~\Po{0}X~,
\end{myeq}
\noindent equipped with maps \w{q^{n}:X\to\Po{n}X} (with
\w[),]{p^{n}\circ q^{n} =q^{n-1}} which induce isomorphisms on   
homotopy groups in degrees \w[.]{\leq n} Here \w{\Po{n}X} is $n$-coconnected
(that is, \w{\pi_{i}\Po{n}X=0} for \w[)]{i>n} and \w{\pi_{i}p^{n}} is
an isomorphism for \w[.]{i<n} The fiber of the map
\w{p^{n}:\Po{n}X\to\Po{n-1}X} is the Eilenberg-Mac~Lane space  
\w[,]{K(\pi_{n}X,n)} so the fibers are determined up to homotopy by
\w[.]{\pis X} Thus a generalization of the homotopy groups of $X$ is
provided by the following notion:

\begin{defn}\label{dpstem}
For any \w[,]{n\geq 0} a \emph{Postnikov $n$-stem} in \w{\Ta} is a tower:
\begin{myeq}[\label{eqponstem}]
\QQ~:=~\left(\dotsc~\to~\Qk{k+1}~\xra{\qk{k+1}}~\Qk{k}~\xra{\qk{k}}~
\Qk{k-1}~\dotsc~\Qk{0}\right)
\end{myeq}
\noindent in \w[,]{\Ta^{(\bN,\leq)}} in which \w{\Qk{k}} is 
\ww{(k-1)}-connected and \ww{(n+k)}-coconnected (so that
\w{\pi_{i}(\Qk{k})=0} for \w{i<k} or \w[)]{i>n+k} and
\w{\pi_{i}(\qk{k})} is an isomorphism for \w[.]{k\leq i<n+k}
Here \w{(\bN,\leq)} is the usual linearly ordered category of the natural
numbers. The space \w{\Qk{k}} is called the $k$-th \emph{$n$-window}
of $\QQ$. 

Such an $n$-stem is thus a collection of overlapping \ww{(k-1)}-connected
\ww{n+k}-types, which may be depicted for \w{n=2} as follows:
%
%\begin{myeq}[\label{eqoverlap}]
$$
\begin{array}{rrrrrrrrr}
\dotsc & \ast & \ast & \ast & & & & \\
& & \ast & \ast & \ast & & & \\
& & & \ast & \ast & \ast & & \\
& & & & \ast & \ast & \ast & \dotsc
\end{array}
$$
%\end{myeq}
%
\noindent where each row exhibits the \w{n+1} non-trivial homotopy groups
(denoted by $\ast$) of one $n$-window, and all those in the $i$-th column
(corresponding to \w[)]{\pi_{i}} are isomorphic.

We denote by \w{\Pstem[n]} the full subcategory of Postnikov $n$-stems
in the functor category \w{\Ta^{(\bN,\leq)}} (with model
category structure on the latter as in \cite[11.6]{PHirM}). Thus the
morphisms in \w{\Pstem[n]} are given by strictly commuting maps of
towers, and \w{f:\QQ\to\QQ'} is a weak equivalence (respectively, a
fibration) if each \w{\fk{k}:\Qk{k}\to\Qk{k}'} is such.
This lets us define the homotopy category of Postnikov $n$-stems, 
\w[,]{\ho\Pstem[n]} as a full sub-category of \w[.]{\ho\Ta^{(\bN,\leq)}}

The category \w{\Pstem[n]} is pointed, has products, and is
equipped with canonical functors 

\mydiagram[\label{eqpstems}]{
%\Ta \ar[d]^{\PP[\infty]}_{\approx} 
\Ta \ar[rd]^<<<<<<<<<<<<<<<<<<{\PP[n]} 
\ar@/^0.5pc/[rrd]^<<<<<<<<<<<<<<<<<<<<{\PP[n-1]}
\ar@/^1pc/[rrrrd]^<<<<<<<<<<<<<<<<<<<<<<<<<<<<<<<<<<<<{\PP[0]} &&&& \\
%\Pstem[\infty]\ar[r]
 & \dotsc~~  \Pstem[n]\ar[r]_<<<<{\bP{n-1}}~~ & 
~\Pstem[n-1]~\ar[r]_<<<<<{\bP{n-2}}~  & ~\dotsc~\ar[r]_<<<<{\bP{0}} &~\Pstem[0]
}
\noindent which preserve products and weak equivalences.
\end{defn}

\begin{remark}\label{rpstems}
The sequence of functors \wref{eqpstems} is described by a commuting
diagram, in which we may take all maps to be fibrations: 
\mydiagram[\label{eqpstem}]{
& & & & & \\
\dotsc \ar[r] & \Qnk{n+k+1}{k+1} \ar[ru] \ar@{.}[u] \ar[d]_{p^{n}_{k+1}} 
\ar[r]^{q^{n}_{k+1}} &
\Qnk{n+k}{k} \ar[ru] \ar@{.}[u] \ar[d]_{p^{n}_{k}} \ar[r]^{q^{n}_{k}} & 
\Qnk{n+k-1}{k-1} \ar[ru] \ar@{.}[u] \ar[d]^{p^{n}_{k-1}} \ar[r] \ar[ru] & 
\dotsc \ar[r] &  \Qnk{n}{0} \ar[d]^{p^{n}_{0}} \\
\dotsc \ar[r] & \Qnk{n+k}{k+1} \ar[d]_{p^{n-1}_{k+1}} \ar[r]^{q^{n-1}_{k+1}} 
\ar[ru]^{r^{n-1}_{k+1}} &
\Qnk{n+k-1}{k} \ar[d]_{p^{n-1}_{k}} \ar[r]^{q^{n-1}_{k}}\ar[ru]^{r^{n-1}_{k}} & 
\Qnk{n+k-2}{k-1} \ar[d]^{p^{n-1}_{k-1}} \ar[r] \ar[ru] & 
\dotsc \ar[r] & \Qnk{n-1}{0} \ar[d]^{p^{n}_{0}} \\
\dotsc \ar[r] & \Qnk{n+k-1}{k+1} \ar@{.}[d] \ar[r]^{q^{n-2}_{k+1}} 
\ar[ru]^{r^{n-2}_{k+1}} &
\Qnk{n+k-2}{k} \ar@{.}[d] \ar[r]^{q^{n-2}_{k}} \ar[ru]^{r^{n-2}_{k}} & 
\Qnk{n+k-3}{k-1} \ar@{.}[d] \ar[r] \ar[ru] & 
\dotsc \ar[r] & \Qnk{n-2}{0} \ar@{.}[d] \\
\dotsc \ar[r] & \Qnk{k+1}{k+1} \ar[r]^{q^{0}_{k+1}} &
\Qnk{k}{k} \ar[r]^{q^{0}_{k}} & \Qnk{k-1}{k-1} \ar[r] & 
\dotsc \ar[r] & \Qnk{0}{0}  
}
\noindent Here \w{\pi_{i}\Qnk{n}{k}=0} for \w{i<k} or \w[,]{i>n}
and all maps induce isomorphisms in \w{\pi_{i}} whenever possible. Thus:

\begin{enumerate}
\renewcommand{\labelenumi}{(\alph{enumi})}
\item The $k$-th column (from the right) is the Postnikov tower for 
\w[.]{\Qnk{}{k}:=\lim_{n}\Qnk{n}{k}}
\item The diagonals are the dual Postnikov system of
  connected covers for \w[.]{\Qnk{j}{0}}
\item The $n$-th row (from the bottom) is a Postnikov $n$-stem.
\item In particular, each space in the $0$-stem (the bottom row) is an 
Eilenberg-Mac~Lane space, and the maps \w{q^{0}_{k}} are nullhomotopic. Thus 
the homotopy type of the bottom line in \w{\ho\Pstem[0]} is  
determined by the collection of homotopy groups 
\w[.]{\{\pi_{k}\Qnk{k}{k}\}_{k=0}^{\infty}}
\end{enumerate}
\end{remark}

\begin{defn}\label{drpstem}
The motivating example of a Postnikov $n$-stem is a \emph{realizable}
one, associated to a space \w[,]{X\in\Ta} and denoted by
\w[,]{\PP[n]X} with \w[.]{\PPk{k}{n}{X}:=\Po{n+k}\wk{X}{k}} 
As usual, \w{\wk{Y}{k}} denotes the \ww{(k-1)}-connected cover of
a space \w[.]{Y\in\Ta} Each fibration \w{\qk{k}:\PPk{k}{n}{X}\to\PPk{k-1}{n}{X}} 
fits into a commuting triangle of fibrations:
\mydiagram[\label{eqpstm}]{
\Pnk{n+k+1}{X}{k+1} \ar[rd]^{p} \ar[rr]^{q} && \Pnk{n+k}{X}{k} \\
& \Pnk{n+k}{X}{k+1} \ar[ru]^{r} &
}
\noindent in which the maps $p$ and $r$ are the fibration of
\wref{eqpostow} and the covering map, respectively. See \cite[\S 10.5]{BBlaC} 
for a natural context in which non-realizable Postnikov $n$-stems arise. 
\end{defn}

\begin{mysubsection}[\label{egpoststem}]{Examples of stems}
The functor \w{\PP[0]_{\ast}:\Ta\to\ho\Pstem[0]} induced by
\w{\PP[0]} is equivalent to the homotopy group functor:
in fact, the homotopy groups of a space define a functor 
\w{\pis:\Ta\to\K} into the product category 
\w[,]{\K:=\prod_{i=0}^{\infty}\,\K_{i}} where \w[,]{\K_{0}=\Seta}
\w[,]{\K_{1}=\Gp} and \w[,]{\K_{i}=\Abgp} for \w[.]{i\geq 2} Moreover,
there is an equivalence of categories 
\w[,]{\vartheta:\K\equiv\ho\Pstem[0]} such that the functor
\w{\PP[0]_{\ast}} is equivalent to the composite functor
\w[.]{\vartheta\circ\pis:\Ta\to\K}  

Similarly, the functor \w{\Ta\to\ho\Pstem[1]} induced by
\w{\PP[1]} is equivalent to the secondary homotopy group functor of
\cite[\S 4]{BMuroS}, in the sense that each secondary homotopy group
\w{\pi_{n,\ast}X} completely determines the $n$-th $1$-window of $X$.
Using the results on secondary homotopy groups in \cite{BMuroS}, one
obtains a homotopy category of algebraic $1$-stems which is equivalent
to \w[.]{\ho\Pstem[1]} 

A category of algebraic models for $2$-stems is only partially known.
The homotopy classification of \ww{(k-1)}-conected \ww{(k+2)}-types is
described for all $k$ in \cite{BauHH}; this theory can be used to
classify homotopy types of Postnikov $2$-stems. 
\end{mysubsection}

%
%c2   The spectral sequence of a simplicial space
%
\sect{The spectral sequence of a simplicial space}
\label{cssss}

We begin with the construction of the homotopy spectral sequence for a
simplicial space (cf.\ \cite{QuiS}, \cite[Theorem B.5]{BFrieH},
and \cite[X,\S 6]{BKanH}), using the version given by Dwyer,
Kan, and Stover in \cite[\S 8]{DKStB} (see also \cite[\S 2,5]{BousCR},
\cite{BousH}, and \cite[\S 3.6]{DKStE}). For this purpose, we require
some explicit constructions for the $E^{2}$-model category of
simplicial spaces.

\begin{defn}\label{dnc}
Given a simplicial object \w[,]{\Xd\in s\C} over a complete pointed
category $\C$, for each \w{n\geq 1} define its $n$-\emph{cycles}
object to be 
$$
Z_{n}\Xd~:=~\{ x\in X_{n}\,|\ d_{i}x=\ast \ \text{for}\
i=0,\dotsc,n\}~.
$$
\noindent Similarly, the the $n$-\emph{chains} object for \w{\Xd} is 
$$
C_{n}\Xd~:=~\{ x\in X_{n}\,|\ d_{i}x=\ast \ \text{for}\ i=1,\dotsc,n\}
$$
\noindent Set \w[.]{Z_{0}\Xd:=X_{0}} We denote the map
\w{d_{0}\rest{C_{n}\Xd}:C_{n}\Xd\to Z_{n-1}\Xd} by \w[.]{\bd^{X_{n}}}
\end{defn}

\begin{notn}\label{nloop}
For any non-negatively graded object \w[,]{T_{\ast}} we write
\w{\Omega T_{\ast}} for the graded object with 
\w{(\Omega T_{\ast})_{j}:=T_{j+1}} for all \w[.]{j\geq 0} The notation
is motivated by the natural isomorphism of graded groups
\w{\pis\Omega X\cong\Omega(\pis X)} for \w[.]{X\in\Ta} 
\end{notn}

\begin{defn}\label{dnhg}
Now assume that $\C$ is a pointed model category of spaces, such as
\w{\Ta} or $\G$, and \w{\Xd} is a Reedy fibrant simplicial object
over $\C$ \wh that is, for each \w[,]{n\geq 1} the universal face map 
\w{\delta_{n}:X_{n}\to M_{n}\Xd} into the $n$-th matching object of
\w{\Xd} is a fibration (see \cite[15.3]{PHirM}). The map
\w{\bd=\bd^{X_{n}}} then fits into a fibration sequence in $\C$:   
\begin{myeq}[\label{eqfibseq}]
\dotsb \Omega Z_{n}\Xd\to Z_{n+1}\Xd\xra{j^{\Xd}_{n+1}}C_{n+1}\Xd
\xra{\bd^{X_{n+1}}} Z_{n}\Xd
\end{myeq}
\noindent (see \cite[Prop.\ 5.7]{DKStB}). 

For each \w[,]{n\geq 0} the $n$-th \emph{natural homotopy group} of
the simplicial space \w[,]{\Xd} denoted by
\w[,]{\pin_{n}\Xd=\pin_{n,\ast}\Xd} the cokernel of the map 
\w{(\bd^{X_{n+1}})_{\#}} (induced on homotopy groups by \w[).]{\bd^{X_{n+1}}}
Note that the cokernel of a maps of groups or pointed sets is
generally just a pointed set.

We thus have an exact sequence of graded groups:  
\begin{myeq}\label{eqnathom}
\pis C_{n+1}\Xd~\xra{(\bd^{X_{n+1}})_{\#}}~\pis Z_{n}\Xd~\xra{\hvart_{n}}~ 
\pin_{n,\ast}{\Xd}\to 0~.
\end{myeq}
\noindent Together the groups \w{(\pin_{n,k}\Xd)_{n,k=0}^{\infty}}
constitute the \emph{bigraded homotopy groups} of \cite[\S 5.1]{DKStB}.  
\end{defn}

\begin{mysubsection}[\label{scsles}]{Construction of the spiral sequence}
Applying the functor \w{\pis} to the fibration sequence
\wref{eqfibseq} yields a long exact sequence, with connecting homomorphism
\w[.]{\partial_{\#}:\Omega\pis Z_{n}\Xd=\pis\Omega Z_{n}\Xd\to\pis Z_{n+1}\Xd}
Note that the inclusion \w{\iota:C_{n}\Xd\hra X_{n}}
induces an isomorphism \w{\iota_{\star}:\pis C_{n}\Xd\cong C_{n}(\pis\Xd)}
for each \w{n\geq 0} (see \cite[Prop.\ 2.7]{BlaCW}). From
\wref{eqnathom} we see that: 
$$
\Omega\pin_{n}\Xd=\Omega\Cok(\bd^{X_{n+1}})_{\#}\cong\Image\partial_{\#}
\cong \Ker(j^{\Xd}_{n+1})_{\#}\subseteq\pis Z_{n+1}\Xd~,
$$
\noindent so we obtain a commutative diagram with exact rows and columns: 
\mydiagram[\label{eqrowcol}]{
& 0 \ar[d] & 0 \ar[d] & 0 \ar[d] & & \\ 
0 \ar[r] & \Ker(j_{n})_{\ast}~ \ar@{^{(}->}[r] \ar@{^{(}->}[d] & 
B_{n+1}\Xd \ar@{^{(}->}[d]
\ar@{->>}[r]^>>>>>{(j_{n})_{\ast}} & B_{n+1}\pis\Xd\ar[r] \ar@{^{(}->}[d] & 
0\ar[d] & \\
0 \ar[r] & \Omega\pin_{n-1}\Xd~ \ar@{^{(}->}[r]^{\ell_{n-1}} 
\ar@{->>}[d] \ar[rd]^{s_{n}} & 
\pis Z_{n}\Xd \ar[d]^{\hvart_{n}} \ar[r]^{(j_{n}^{\Xd})_{\#}} & 
Z_{n}\pis\Xd \ar@{->>}[r] \ar[d]^{\vart_{n}} & \Cok h_{n} \ar[r] \ar[d]^{=} & 0 \\
0 \ar[r] & \Ker h_{n}~ \ar@{^{(}->}[r] \dto & \pin_{n}\Xd \ar[d] \ar[r]^{h_{n}} & 
\pi_{n}\pis\Xd \ar@{->>}[r] \ar[d] & \Cok h_{n}\ar[r] \ar[d] & 0 \\
 & 0 & 0 & 0 & 0 &
}
\noindent in which 
\w{B_{n+1}\Xd:=\Image(\bd^{X_{n+2}})_{\#}\subseteq\pis Z_{n}\Xd} and 
\w{B_{n+1}\pis X_{n+2}:=\Image\bd^{\pis X_{n+2}}} are the respective
boundary objects. Note that the map 
\w{(j_{n}^{\Xd})_{\#}:\pis Z_{n}\Xd\to\pis C_{n}\Xd} induced by the
inclusion \w{j_{n}^{\Xd}} of \wref{eqfibseq} above in fact factors
through \w[,]{Z_{n}\pis\Xd} as indicated in the middle row of
\wref[.]{eqrowcol} 

This defines the map of graded groups
\w[.]{h_{n}:\pin_{n}\Xd\to\pi_{n}(\pis\Xd)} Note that for \w{n=0} the map
\w{\hat{\iota}_{\star}} is an isomorphism, so \w{h_{0}} is, too.
The map \w{s_{n}:\Omega\pin_{n-1}\Xd\to\pin_{n}\Xd} is the composite
of the inclusion \w{\ell_{n-1}:\Ker(j^{\Xd}_{n})_{\#}\hra\pis Z_{n}\Xd} with 
the quotient map \w{\hvart_{n}:\pis Z_{n}\Xd\to\pin_{n}\Xd} 
of \wref[,]{eqnathom} using the natural identification of
\w{\Omega\pin_{n}\Xd} with \w[.]{\Ker(j^{\Xd}_{n+1})_{\#}}

The map \w{\partial_{n+2}:\pi_{n+2}\pis\Xd\to\Omega\pin_{n}\Xd}
is induced by the composite 
\begin{myeq}[\label{eqpartial}]
Z_{n+2}\pis\Xd\subseteq C_{n+2}\pis\Xd\cong\pis C_{n+2}\Xd
\xra{(\bd^{X_{n+2}})_{\#}}\pis Z_{n+1}\Xd~,
\end{myeq}
\noindent which actually lands in \w{\Ker(j^{\Xd}_{n+1})_{\#}} by the
exactness of the long exact sequence for the fibration \wref[.]{eqfibseq}

These maps \w[,]{s_{n}} \w[,]{h_{n}} and \w{\partial_{n}} fit into 
a \emph{spiral long exact sequence:} 
\begin{myeq}[\label{eqspiral}]
\begin{split}
\ldots~\to~\Omega\pinat{n-1}\Xd & ~\xra{s_{n}}~\pinat{n}\Xd~\xra{h_{n}}~
\pi_{n}\pis\Xd~\xra{\partial_{n}}~\Omega\pinat{n-2}\Xd\\ 
&~\xra{s_{n-1}}~\pinat{n-1}\Xd~\to~\ldots~\to~\pinat{0}\Xd~\xra{\cong}~
\pi_{0}\pis\Xd 
\end{split}
\end{myeq}
\noindent (cf.\ \cite[8.1]{DKStB}).
\end{mysubsection}

\begin{mysubsection}[\label{sssss}]{The spectral sequence of a simplicial space}
For any simplicial space \w{\Xd\in s\Ta} (or bisimplicial set), Bousfield
and Friedlander showed that there is a first-quadrant spectral sequence 
of the form
\begin{myeq}[\label{eqbfried}]
E^{2}_{s,t}=\pi_{s}\pi_{t}\Xd~\Ra~\pi_{s+t}\|\Xd\|~,
\end{myeq}
\noindent where \w{\|\Xd\|\in\Ta} is the realization (or the diagonal,
in the case of \w[).]{\Xd\in s\Sa} The spectral sequence is always
defined, but \w{\Xd} must satisfy certain ``Kan conditions'' to
guaranteee \emph{convergence} \wh see \cite[Theorem B.5]{BFrieH}. 

In \cite[\S 8.4]{DKStB}, Dwyer, Kan and Stover showed that \wref{eqbfried}
coincides up to sign, from the \Ett on, with the spectral
sequence associated to the exact couple of \wref[,]{eqfibseq} which we
call the \emph{spiral spectral sequence} for \w[.]{\Xd}

If we assume that each \w{X_{n}} is connected, by taking loops (or
applying Kan's functor $G$, if \w[),]{\Xd\in s\Sa} we may replace
\w{\Xd} by a bisimplicial group \w[,]{G\Xd\in s\G} and then
\wref{eqbfried} becomes the spectral sequence of \cite{QuiS}.
\end{mysubsection}

%
%c3   Simplicial stems and truncated spectral sequences
%
\sect{Simplicial stems and truncated spectral sequences}
\label{csstss}

As noted in \S \ref{egpoststem}, the \Ett of any of the above equivalent
spectral sequences for a simplicial space \w{\Xd} is determined
explicitly by the simplicial $0$-stem of \w[.]{\Xd}

Our goal is to extend this description to the higher terms of the
spectral sequence. For this purpose, fix \w[,]{n\geq 0} and consider
a simplicial Postnikov $n$-stem \w{\Qd} (which need not be
realizable as \w{\PP[n]\Xd} for some simplicial space \w[).]{\Xd} 
This is equivalent to having a collection of simplicial spaces
\w{\Qdnk{n+k}{k}} for each \w[,]{k\geq 0} equipped with maps as in
\wref[,]{eqponstem} with \w{\pi_{i}\Qdnk{n+k}{k}=0} for \w{i<k} or \w[.]{i>n+k}

We assume that \w{\Qd} is \emph{Reedy fibrant} in the sense that for
each \w[,]{k\geq 0} the simplicial space \w{\Qdnk{n+k}{k}} is Reedy fibrant.
In this case, the ``$n$-stem version'' of the spiral long exact
sequence is defined as follows: for each \w[,]{t,i,k\geq 0} set 
\w{\pinatt{t,i}{k,n}\Qd:=\pinat{t,i+k}\Qdnk{n+k}{k}} and  
\begin{myeq}[\label{eqtruncgps}]
\pinA{k,n}{i}\Qd~:=~\pi_{i+k}\Qdnk{n+k}{k}~=~\begin{cases} 
\pi_{i+k}\Qd &\text{if}~~0\leq i\leq n\\
0 & \text{otherwise.}
\end{cases} 
\end{myeq}
\noindent Note that the \ww{(i+k)}-th homotopy group \w{\pi_{i+k}\Qd} 
of a Postnikov $n$-stem \w{\Qd} is well-defined, and coincides with
\w{\pi_{i+k}\Xd} for \w{0\leq i\leq n} when \w[.]{\Qd=\PP[n]\Xd}

\begin{defn}\label{dsns}
The collection of long exact sequences \wref{eqspiral} for 
\w{\Qdnk{n+k}{k}} (indexed by \w[):]{k\geq 0} 
\mydiagram[\label{eqpspiral}]{
\dotsc \Omega\pinatt{t-1,\ast}{k,n}\Qd~\ar[r]^<<<<<{s_{t}\q{k,n}} &
\pinatt{t,\ast}{k,n}\Qd~\ar[r]^<<<<<{h_{t}\q{k,n}} &
\pi_{t}\pinA{k,n}{\ast}\Qd~\ar[r]^<<<<<{\partial_{t}\q{k,n}} &
\Omega\pinatt{t-2,\ast}{k,n}\Qd\dotsc~,
}
\noindent together with the maps between adjacent $k$-windows induced
by the map $q$ in \wref[,]{eqpstem} will be called the 
\emph{spiral $n$-system} of \w[.]{\Qd} When \w[,]{\Qd=\PP[n]\Xd} we
will refer to this simply as the spiral $n$-system of \w[.]{\Xd}
\end{defn}

\begin{remark}\label{rsns}
Using the exactness of \wref[,]{eqpspiral} definition
\wref{eqtruncgps} implies that: 
\begin{myeq}[\label{eqvannatgp}]
\pinatt{t,i}{k,n}\Qd=\pinat{t,i}\Qdnk{n+k}{k}~=~0\hs\text{for}~~~i>n~,
\end{myeq}
\noindent by induction on \w[.]{t\geq 0} Note, however, that while the
groups \w{\pinA{k,n}{i}\Qd} are explicitly described by
\wref[,]{eqtruncgps} the dependence of \w{\pinatt{t,i}{k,n}\Qd} on $k$
and $n$ requires more care. 
\end{remark}

\begin{mysubsection}[\label{setht}]{The \Et{2}\ of the spectral sequence}
The spiral $0$-system of a simplicial Postnikov $0$-stem \w{\Qd}
reduces to a series of isomorphisms 
\w{h_{t}:\pinatt{t,\ast}{k,0}\Qd\cong\pi_{t}\pinA{k,0}{\ast}\Qd} 
(for each \w[).]{k\geq 0} When \w{\Qd=\PP[0]\Xd} is the Postnikov
$0$-stem of a simplicial space \w[,]{\Xd} this allows us to identify
the \ww{E^{2}_{t,k}}-term of the spiral spectral sequence for
\w[,]{\Xd} which is:
$$
\pi_{t}\pi_{k}\Xd~=~\pi_{t}\pi_{k}\Po{0+k}\wk{\Xd}{k}~=~
\pi_{t}\pi_{k}\PPk{k}{0}{\Xd}~=~\pi_{t}\pinA{k,0}{\ast}\PP[0]\Xd~=~
\pi_{t}\pinA{k,0}{\ast}\Qd,
$$
\noindent with \w[.]{\pinatt{t,\ast}{k,0}\Qd=\pinatt{t,\ast}{k,0}\PP[0]\Xd} 
\end{mysubsection}

The first interesting case is the spiral $1$-system, for which we have:

%
%      Proposition:  E^3 term and truncated spiral LES
%
\begin{prop}\label{pett}
The \Et{3}\ of the spiral spectral sequence for a simplicial space
\w{\Xd} is determined by the spiral $1$-system of \w[.]{\Xd}  
In fact, \w{d^{2}_{t,k}} may be identified with  
\w[,]{\partial_{t}\q{k,1}:\pi_{t}\pi_{k}\Xd\to\Omega\pinatt{t-2,0}{k,1}\Xd}
while \w{E^{3}_{t,k}} is the image of the composite map
\mydiagram[\label{eqetht}]{
\pinatt{t,0}{k,1}\Xd \ar[r]^<<<<{h_{t}\q{k,1}}&
\pi_{t}\pi_{k}\Xd~\cong~\pi_{t}\pi_{1}\q{k-1,1}\Xd~& 
\ar[l]^<<<<{\cong}_<<<<{h_{t}\q{k-1,1}}
~\pinatt{t,1}{k-1,1}\Xd \ar[r]^{s_{t+1}\q{k-1,1}} &
\pinatt{t+1,0}{k-1,1}\Xd~.
}
\end{prop}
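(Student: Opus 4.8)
The plan is to view the spiral $1$-system as the first derived couple of the exact couple underlying the spiral spectral sequence, and to exploit the fact that at the top internal degree of a $1$-window the comparison map $h$ becomes invertible. First I would recall, from \S\ref{sssss} and \cite[\S 8.4]{DKStB}, that the spiral spectral sequence is the spectral sequence of the exact couple obtained by applying $\pis$ to \wref{eqfibseq}; its $E^{2}$-term is $\pi_{\ast}\pi_{\ast}\Xd$ and its differential $d^{2}$ is the composite $h_{t-2}\circ\partial_{t}$ of the spiral exact sequence \wref{eqspiral} for $\Xd$. The structure maps $q^{n}$ of the realizable stem $\PP[1]\Xd$ (see \wref{eqpstem}) induce a natural comparison from the spiral sequence of $\Xd$ to the spiral sequence \wref{eqpspiral} of each window $\Qdnk{k+1}{k}$; since $\wk{\Xd}{k}\to\Xd$ and the Postnikov map are isomorphisms on $\pi_{k}$ and $\pi_{k+1}$, this comparison induces isomorphisms on the groups $\pi_{t}\pi_{k}\Xd$ and $\pi_{t}\pi_{k+1}\Xd$ and carries $\partial_{t}$, $s_{t}$, $h_{t}$ for $\Xd$ to $\partial_{t}\q{k,1}$, $s_{t}\q{k,1}$, $h_{t}\q{k,1}$ (up to the comparison on the natural homotopy groups).

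The one genuinely useful fact is that $h$ is an isomorphism at the top of a $1$-window: applying the vanishing \wref{eqvannatgp} of Remark \ref{rsns} (valid for $i>n=1$) to the two $\Omega$-shifted neighbours of \wref{eqpspiral} in internal degree $1$ shows that $h_{t}\q{k,1}\colon\pinatt{t,1}{k,1}\Xd\to\pi_{t}\pi_{k+1}\Xd$ is trapped between zero groups, hence is an isomorphism for all $t,k$. Combining this with the naturality square for $d^{2}=h_{t-2}\circ\partial_{t}$ gives $d^{2}_{t,k}=h_{t-2}\q{k,1}\circ\partial_{t}\q{k,1}$; since the left factor is now invertible, $d^{2}_{t,k}$ is identified with $\partial_{t}\q{k,1}\colon\pi_{t}\pi_{k}\Xd\to\Omega\pinatt{t-2,0}{k,1}\Xd$, which is the first assertion.

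For the $E^{3}$-term I would use the derived-couple formula $E^{3}_{t,k}=\Ker d^{2}_{t,k}/\Image d^{2}_{t+2,k-1}$. Exactness of \wref{eqpspiral} for the window $k$ gives $\Ker d^{2}_{t,k}=\Ker\partial_{t}\q{k,1}=\Image h_{t}\q{k,1}$, while exactness for the window $k-1$ (where $\pi_{t}\pi_{k}\Xd\cong\pi_{t}\pi_{1}\q{k-1,1}\Xd$ is the top internal degree) gives $d^{2}_{t+2,k-1}=h_{t}\q{k-1,1}\circ\partial_{t+2}\q{k-1,1}$ and hence $\Image d^{2}_{t+2,k-1}=h_{t}\q{k-1,1}(\Ker s_{t+1}\q{k-1,1})$, using $\Image\partial_{t+2}\q{k-1,1}=\Ker s_{t+1}\q{k-1,1}$. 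Both are subgroups of $\pi_{t}\pi_{k}\Xd$. Transporting them through the isomorphism $(h_{t}\q{k-1,1})^{-1}$ into $\pinatt{t,1}{k-1,1}\Xd$ and applying $s_{t+1}\q{k-1,1}$, the relation $d^{2}\circ d^{2}=0$ forces $\Ker s_{t+1}\q{k-1,1}$ to lie inside $(h_{t}\q{k-1,1})^{-1}\Image h_{t}\q{k,1}$, so the first isomorphism theorem for $s_{t+1}\q{k-1,1}$ turns the quotient $\Ker d^{2}_{t,k}/\Image d^{2}_{t+2,k-1}$ exactly into the image of the composite \wref{eqetht}.

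The main obstacle is the comparison invoked in the first paragraph: one must check that the structure maps of $\PP[1]\Xd$, which are only highly connected in a range, really do induce the claimed identifications of $\partial$, $s$ and $h$ between the spiral sequence of $\Xd$ and those of the windows, and commute with the differentials \wh this is where the connectivity estimates for Postnikov stems do the work. The remaining difficulty is purely organisational: keeping the simplicial degree $t$, the internal degree $i\in\{0,1\}$, the $\Omega$-shift, and the two window indices $k$ and $k-1$ aligned, and in particular verifying that the two incarnations of $\pi_{t}\pi_{k}\Xd$ (internal degree $0$ in window $k$ and internal degree $1$ in window $k-1$) are identified compatibly by the map $q$. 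Once these are in place, every step reduces to exactness of \wref{eqpspiral}, the top-degree isomorphism, or $d^{2}\circ d^{2}=0$.
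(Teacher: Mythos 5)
Your proposal is correct and follows essentially the same route as the paper: the vanishing \wref{eqvannatgp} makes $h$ invertible at the top internal degree of each $1$-window, so that \w{d^{2}_{t,k}=h_{t-2}\q{k,1}\circ\partial_{t}\q{k,1}} is identified with \w[,]{\partial_{t}\q{k,1}} and \w{E^{3}_{t,k}=\Ker d^{2}_{t,k}/\Image d^{2}_{t+2,k-1}} is converted into the image of \wref{eqetht} using exactness of \wref{eqpspiral} in windows $k$ and \w[.]{k-1} The comparison of exact couples that you defer to your final paragraph is precisely what the paper's large \Et{1}-diagram verifies, using that \w{C_{n+1}} and \w{Z_{n}} are limits and hence commute with \w[,]{\Po{1}} so the relevant low-internal-degree portion of the exact couple for the window agrees naturally with that for \w[;]{\Xd} your remaining steps (first isomorphism theorem in place of the diagram \wref[)]{eqetterm} are only cosmetically different.
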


Observe that \wref{eqetht} involves maps from different windows of the
spiral $1$-system, implicitly identified using the isomorphisms
induced by the map $q$ in \wref[.]{eqpstem} 

\begin{proof}
Because \w{n=1} throughout, we abbreviate
\w{\pinatt{t,i}{k,1}\Qd} to \w[,]{\pinatt{t,i}{k}\Qd} and
\w{\pinA{k,1}{i}\Qd} to \w[,]{\pinA{k}{i}\Qd} observing that 
\w{\pinA{k}{i}\Qd} is simply \w{\pi_{i+k}\Xd} for
\w[,]{i=0,1} and zero otherwise, since \w[.]{\Qd=\PP[1]\Xd} 
Thus the spiral $1$-system \wref{eqpspiral} is non-trivial for each
\w{t\geq 1} in (internal) degrees \w{i=0,1} only, and we can write it
in two rows:   
$$
\begin{array}{rrl}
%\dotsc\hsm 
0\hsm \lora\hsm  \pinatt{t,1}{k}\Qd \xra{~\cong~} 
\pi_{t}\pinA{k}{1}\Qd\hsm\lora & 0\hs\lora\hs & 
\pinatt{t-1,1}{k}\Qd \xra{~\cong~} \pi_{t-1}\pinA{k}{1}\Qd \\%\dotsc\\
%\dotsc 
\Omega\pinatt{t-1,0}{k}\Qd \xra{s_{t}} \pinatt{t,0}{k}\Qd \xra{h_{t}} 
\pi_{t}\pinA{k}{0}\Qd\hsm \xra{\partial_{t}} & 
\Omega\pinatt{t-2,0}{k}\Qd \xra{s_{t-1}}  & 
\pinatt{t-1,0}{k}\Qd \xra{h_{t-1}} \pi_{t-1}\pinA{k}{0}\Qd %\dotsc
\end{array}
$$

Since \w{\Qd:=\PP[1]\Xd} is the simplicial Postnikov $1$-stem of
\w[,]{\Xd} we actually have a collection of two-row  
long exact sequences, one for each $k$-window of \w[.]{\PP[1]\Xd}

For each such $k$-window \w[,]{\PP_{k}[1]\Xd} we can use the top row
to identify 
$$
\Omega\pinatt{t,0}{k}\Qd~=~\Omega\pinat{t,0}\PP_{k}[1]\Xd~=~
\pinat{t,1}\PP_{k}[1]\Xd~=~\pinatt{t,1}{k}\Qd
$$
\noindent with 
\w[,]{\pi_{t}\pinA{k}{1}\Qd=\pi_{t}\pinA{1}{t}\PP_{k}[1]\Xd=\pi_{t}\pi_{k+1}\Xd} 
so the bottom row reduces to:

\begin{mypict}[\label{eqbottom}]{(320,40)(-30,0)
%
%           Top row
%
\put(0,40){$\pi_{t-1}\pi_{k+1}\Xd$}
\put(65,45){\vector(1,0){50}}
\put(80,49){$s_{t}\q{k,1}$}
\put(35,34){\vector(1,-1){21}}
\put(35,34){\vector(1,-1){23}}
\put(120,40){$\pinatt{t,0}{k}\Qd$}
\put(160,45){\vector(1,0){50}}
\put(175,49){$h_{t}\q{k,1}$}
\put(146,34){\vector(1,-1){21}}
\put(146,34){\vector(1,-1){23}}
\put(220,40){$\pi_{t}\pi_{k}\Xd$}
\put(265,45){\vector(1,0){40}}
\put(280,49){$\partial_{t}\q{k,1}$}
\put(245,34){\vector(1,-1){21}}
\put(245,34){\vector(1,-1){23}}
\put(310,40){$\pi_{t-2}\pi_{k+1}\Xd$}
%
%         Bottom row
%
\put(50,0){$\Image(s_{t}\q{k,1})$}
\put(105,15){\vector(1,1){20}}
\put(105,15){\vector(1,1){2}}
\put(170,0){$\Image(h_{t}\q{k,1})$}
\put(185,-13){$=$}
\put(170,-26){$\Ker(\partial_{t}\q{k,1})$}
\put(214,15){\vector(1,1){20}}
\put(214,15){\vector(1,1){2}}
\put(270,0){$\Image(\partial_{t}\q{k,1})$}
\put(310,15){\vector(1,1){20}}
\put(310,15){\vector(1,1){2}}
}
\end{mypict}

\quad\vsp\quad

Note that the following part of the \Et{1}\ of the exact couple
for the fibration sequence 
\w[,]{C_{n+1}\Po{1}\Omega^{i}\Xd\to Z_{n}\Po{1}\Omega^{i}\Xd}
(as in \wref[):]{eqfibseq}
\begin{equation*}
\xymatrix@R=23pt{
\pi_{1}Z_{t-1}\Po{1}\Omega^{k}\Xd \ar[r]^{(j_{t-1})_{\#}} 
\ar[ddd]^{\partial_{\ast}} &
\pi_{1}C_{t-1}\Po{1}\Omega^{k}\Xd \ar[r]^{(d_{0}^{t-1})_{\#}} & 
\pi_{1}Z_{t-2}\Po{1}\Omega^{k}\Xd \ar[r]^{(j_{t-2})_{\#}} 
\ar[rd] \ar@/_9pc/[ddd]_{\partial_{\ast}} \ar@{->>}[d] & 
\pi_{1}C_{t-2}\Xd \to \dotsc \\
&&\quad\hsn\Omega\pinatt{t-2,0}{k}\Xd=\pinatt{t-2,1}{k}\Xd \ar@{^{(}->}[dd] 
\ar[rd]^>>>>>>>>>>>{h_{t-2,1}\q{k+1,1}}_{\cong} & 
Z_{t-2}\pi_{1}\Po{1}\Omega^{k}\Xd \ar@{^{(}->}[u]_{\inc} 
\ar@{->>}[d]^{\vart_{t-2}}\\ 
&&&\pi_{t-2}\pi_{k+1}\Xd\\
\pi_{0}Z_{t}\Po{1}\Omega^{k}\Xd \ar[r]^{(j_{t})_{\#}} \ar[rd] 
\ar@{->>}[d]^{\hvart_{t}} & 
\pi_{0}C_{t}\Po{1}\Omega^{k}\Xd \ar[r]^<<<<{(d_{0}^{t})_{\#}} & 
\pi_{0}Z_{t-1}\Po{1}\Omega^{k}\Xd \ar[r]^{(j_{t-1})_{\#}} \ar[rd] 
\ar@{->>}[d]^{\hvart_{t-1}} & 
\pi_{0}C_{t-1}\Po{1}\Omega^{k}\Xd \to \dotsc\\
\pinatt{t,0}{k}\Xd \ar[rd]^{h_{t}\q{k,1}} & 
Z_{t}\pi_{k}\Xd \ar@{^{(}->}[u]_{\inc} 
\ar@{->>}[d]_{\vart_{t}} & \pinatt{t-1,0}{k}\Xd \ar[rd]^{h_{t-1}\q{k,1}} & 
Z_{t-1}\pi_{k}\Xd \ar@{^{(}->}[u]_{\inc} \ar@{->>}[d]^{\vart_{t-1}}\\ 
&\pi_{t}\pi_{k}\Xd \ar[ruuuu]^>>>>>>>>>>>>>{\partial_{t,0}\q{k,1}}&&
\pi_{t-1}\pi_{k}\Xd
}
\end{equation*}
\noindent is naturally isomorphic to the exact couple for 
\w[,]{C_{n+1}\Omega^{k}\Xd\to Z_{n}\Omega^{k}\Xd}
since \w{C_{n+1}} and \w{Z_{n}}  are limits, so they
commute with \w[,]{\Po{1}} and then
\w[,]{\pi_{1}\Po{1}Z_{t-1}\Omega^{k}\Xd\cong\pi_{1}Z_{t-1}\Omega^{k}\Xd}
and so on. This does not imply, of course, that 
\w[.]{\pinatt{t,1}{k}\Xd\cong\pinat{t,k+1}\Xd}

We therefore see from \wref{eqrowcol} and \wref{eqpartial} that the
differential  \w{d^{2}_{t,k}:E^{2}_{t,k}\to E^{2}_{t-2,k+1}} may be
identified with: 
\begin{myeq}[\label{eqdtwodiff}]
\pi_{t}\pi_{k}\Xd \cong \pi_{t}\pi_{0}\q{k,1}\Xd 
\xra{\partial_{t,0}\q{k,1}} \Omega\pinatt{t-2,0}{k}\Xd=
\pinatt{t-2,1}{k}\Xd ~\stackrel{h_{t}}{\cong}~ 
\pi_{t-2}\pi_{1}\q{k,1}\Xd\cong\pi_{t-2}\pi_{k+1}\Xd
\end{myeq}

Now by definition, \w{E^{3}_{t,k}} fits into a commutative diagram:
\mydiagram[\label{eqetterm}]{
E^{2}_{t+2,k-1} \ar[r]^<<<<<<<{d^{2}_{t+2,k-1}} \ar@{->>}[d]_{r} &
E^{2}_{t,k} \ar@{->>}[r]^<<<<<{q} & \Cok(d^{2}_{t+2,k-1}) \\
\Image(d^{2}_{t+2,k-1}) \ar@{^{(}->}[r]^{\ell} & 
\Ker(d^{2}_{t,k}) \ar@{^{(}->}[u]_{j} \ar@{->>}[r]^{s} & 
E^{3}_{t,k} \ar@{^{(}->}[u]_{\kappa}
}
\noindent with exact rows, $\ell$ $j$ and $\kappa$ monic, and thus 
\w[.]{E^{3}_{t,k}\cong\Image(q\circ j)}

From the exactness of \wref{eqpspiral} (together with \wref[)]{eqbottom} 
we see that 
\w{\Cok(d^{2}_{t+2,k-1})=\Cok(\partial_{t+2}\q{k-1,1})=\Image(s_{t+1}\q{k-1,1})}
and 
\w[,]{\Ker(d^{2}_{t,k})=\Ker(\partial_{t}\q{k,1})=\Image(h_{t}\q{k,1})}
so \w{E^{3}_{t,k}=\Image(q\circ j)} is indeed the image of the map
in \wref[.]{eqetht}
\end{proof}

\begin{defn}\label{dtss}
An $r$-\emph{truncated spectral sequence} is one defined up to and
including the \Et{r}, together with the differential
\w[,]{d^{n}:E^{r}_{t,i}\to E^{r}_{t-r-1,t+r}} but without
requiring that \w{d^{r}\circ d^{r}=0} (so the \Et{r+1}\ is defined in 
terms of the $r$-truncated spectral sequence only if \w[).]{d^{r}d^{r}=0}
\end{defn}

The main example is the $n$-truncation of an (ordinary)
spectral sequence (such as that of a simplicial space). In this case
we do have \w[,]{d^{r}\circ d^{r}=0} of course.

%
%      Corollary:  E^2 term of a simplicial 1-stem
%
\begin{cor}\label{cett}
Any Reedy fibrant simplicial Postnikov $1$-stem has a well-defined
$2$-truncated spiral spectral sequence. Moreover, if \w{\Qd=\PP[1]\Xd}
for some simplicial space  \w[,]{\Xd} this $2$-truncated spectral
sequence coincides with the $2$-truncation of the
Bousfield-Friedlander spectral sequence for \w[.]{\Xd} 
\end{cor}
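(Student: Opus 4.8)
The plan is to read off, directly from the spiral $1$-system of $\Qd$, every piece of data required by Definition~\ref{dtss} in the case $r=2$ --- the terms $E^1,E^2$ and the differentials $d^1,d^2$ --- and then, in the realizable case, to match these against the corresponding pieces of the Bousfield-Friedlander spectral sequence. The point is that the construction carried out in Proposition~\ref{pett} never leaves the spiral $1$-system, so it applies verbatim to an arbitrary Reedy fibrant simplicial Postnikov $1$-stem rather than to $\PP[1]\Xd$.

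For the first assertion I would proceed as follows. Since $\Qd$ is Reedy fibrant, each window $\Qdnk{1+k}{k}$ is a Reedy fibrant simplicial space, so the fibration sequence \eqref{eqfibseq} and the spiral long exact sequence \eqref{eqspiral} are available window by window; together with the transition maps $q$ they assemble into the spiral $1$-system \eqref{eqpspiral} of Definition~\ref{dsns}. The $E^2$-term $E^2_{t,k}=\pi_t\pi_k\Qd$, together with the underlying $E^1$-term and $d^1$, is furnished by the simplicial $0$-stem $\bP{0}\Qd$ exactly as in \S\ref{setht}, using that $\pi_k\Qd$ is well-defined by \eqref{eqtruncgps}. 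The differential $d^2_{t,k}\colon E^2_{t,k}\to E^2_{t-2,k+1}$ is then defined by the composite \eqref{eqdtwodiff}, namely $\partial_t\q{k,1}$ followed by the window identifications coming from the isomorphisms $h$; as written this refers only to the maps $\partial$, $h$, $s$ of the spiral $1$-system and the transition maps $q$, all of which exist for any $\Qd$. Verifying that $(E^1,d^1,E^2,d^2)$ obeys Definition~\ref{dtss} --- that $E^2$ is the homology of $(E^1,d^1)$ and that $d^2$ is a genuine map on $E^2$ --- is precisely the diagram chase of Proposition~\ref{pett}, none of which used realizability; note that we neither need nor assert $d^2\circ d^2=0$.

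For the second assertion, take $\Qd=\PP[1]\Xd$. Then $\pi_k\Qd=\pi_k\Xd$ by \eqref{eqtruncgps}, so the terms $E^1,d^1,E^2$ built above agree with those of the spiral spectral sequence of $\Xd$, and Proposition~\ref{pett} identifies the $d^2$ constructed above with the honest differential $d^2_{t,k}$ of that spectral sequence. By \S\ref{sssss} the spiral spectral sequence coincides, from the $E^2$-term on (up to sign), with the Bousfield-Friedlander spectral sequence \eqref{eqbfried}; truncating at $r=2$ therefore produces exactly its $2$-truncation, which is what we must show. Assembling these identifications gives the asserted coincidence.

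The step I expect to be the main obstacle is the well-definedness of $d^2$ in the first assertion: a priori the differential of the spectral sequence could require information beyond the $1$-stem. What rescues this is the commutation of the truncation $\Po{1}$ with the limits $Z_n$ and $C_n$ established inside the proof of Proposition~\ref{pett} (giving $\pi_i Z_n\Po{1}\Omega^k\Xd\cong\pi_i Z_n\Omega^k\Xd$ for $i=0,1$), which confines the part of the exact couple governing $E^2$ and $d^2$ to the two internal degrees $i=0,1$. For a non-realizable $\Qd$ there is no ambient $\Omega^k\Xd$, but Reedy fibrancy of each window supplies \eqref{eqfibseq} directly, while the vanishing \eqref{eqvannatgp} of Remark~\ref{rsns} forces the spiral $1$-system into the two-row form used in Proposition~\ref{pett}; this is exactly what makes the formula \eqref{eqdtwodiff} meaningful and $d^2$ well-defined in general.
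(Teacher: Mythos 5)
Your argument is correct and follows the same route the paper intends: the corollary is left without explicit proof there precisely because, as you observe, the construction in Proposition \ref{pett} only ever references the spiral $1$-system (which Definition \ref{dsns} provides for any Reedy fibrant simplicial Postnikov $1$-stem, realizable or not), while the realizable case reduces to Proposition \ref{pett} together with the identification of the spiral and Bousfield--Friedlander spectral sequences from the \Ett\ on recalled in \S \ref{sssss}. Your closing remark on why $d^{2}$ stays within the $1$-stem (commutation of \w{\Po{1}} with the limits \w{Z_{n}} and \w{C_{n}}, plus the vanishing \wref{eqvannatgp} forcing the two-row form) is exactly the relevant point and matches the paper's reasoning.
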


In general, we have a less explicit description of the higher terms in
the spiral spectral sequence:

%
%      Theorem:  E^r term and truncated spiral LES
%
\begin{thm}\label{tert}
For each \w[,]{r\geq 0} the \Et{r+2}\ of the spiral spectral sequence
for a simplicial space \w{\Xd} is determined by the spiral $r$-system
of \w[.]{\Xd} Moreover, for any \w[,]{\alpha\in E^{r+1}_{t,i}} we have 
\w{d^{r+1}_{t,i}(\alpha)=\beta\in E^{r+1}_{t-r-1,i+r}} if and only if
$\alpha$ and $\beta$ have representatives \w{\bar{a}\in\pi_{t}\pi_{i}\Xd}
and \w[,]{\bar{b}\in\pi_{t-r-1}\pi_{i+r}\Xd} respectively, such that:
\begin{myeq}[\label{eqdrpo}]
(s_{t-2,1}\q{i,r})\circ(s_{t-3,2}\q{i,r})\circ\dotsb\circ(s_{t-r,r-1}\q{i,r})
\circ(h_{t-r-1,r}\q{i,r})^{-1}(\bar{b})~=~\partial_{t,0}\q{i,r}(\bar{a})
\end{myeq}
\end{thm}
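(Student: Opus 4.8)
The plan is to induct on $r$, using Proposition \ref{pett} as the base case $r=1$ (and the trivial identification of the \Ett, handled in \S\ref{setht}, as $r=0$). The guiding principle is that the spiral $r$-system of $\Xd$ contains exactly the data of the spiral spectral sequence through the \Et{r+2}, because an $r$-stem records homotopy groups in a band of internal degrees $i=0,\dotsc,r$, and the differentials $d^2,\dotsc,d^{r+1}$ only ever propagate information across at most $r+1$ adjacent windows. First I would set up the bookkeeping: the spiral long exact sequence \wref{eqspiral} applied window-by-window to $\Qdnk{n+k}{k}=\PPk{k}{r}{\Xd}$ gives, for each internal degree $i$, boundary maps $\partial$, Hurewicz-type maps $h$, and suspension maps $s$ relating $\pinatt{t,i}{k,r}\Xd$ to $\pi_t\pi_i\Xd$. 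By Remark \ref{rsns} these natural homotopy groups vanish for $i>r$, so only finitely many internal degrees are active, and I would make precise how the map $q$ of \wref{eqpstem} identifies a given homotopy group as it appears in successive windows.

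The core of the argument is to reconstruct the higher differential $d^{r+1}$ from these maps by a zig-zag. The exact couple underlying the spiral spectral sequence, as in \S\ref{sssss}, produces $d^{r+1}$ as the class of a lift through $r$ successive applications of the connecting-type maps in the tower; this is precisely the content of the chain on the left-hand side of \wref{eqdrpo}. I would argue that an element $\alpha\in E^{r+1}_{t,i}$ survives to the \Et{r+1}\ exactly when its representative $\bar a\in\pi_t\pi_i\Xd$ lies in the successive kernels/images dictated by the spiral $r$-system, and that the value $d^{r+1}_{t,i}(\alpha)$ is computed by applying $\partial_{t,0}\q{i,r}$ to $\bar a$ and then pulling the result back along the composite of suspension maps $s_{t-2,1},\dotsc,s_{t-r,r-1}$ and the isomorphism $h_{t-r-1,r}\q{i,r}$ that relates internal degree $r$ in the window $k=i$ to internal degree $0$ in the window $k=i+r$. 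Concretely, I would verify that \wref{eqdrpo} is the identity of two representatives in $\pi_{t-r-1}\pi_{i+r}\Xd$, one obtained by pushing $\bar b$ up the chain of $s$-maps and the other obtained by pushing $\bar a$ across by $\partial$; this reduces, via the inductive hypothesis, to the case already established for $d^r$ together with the single extra boundary step.

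The main obstacle I expect is the compatibility of the \emph{window identifications} with the maps $s$, $h$, and $\partial$. Each of $s_{t-j,j-1}\q{i,r}$, $h$, and $\partial$ lives a priori in a fixed $k$-window, but \wref{eqdrpo} silently transports elements between windows $k=i,\,i+1,\dotsc,i+r$ using the isomorphisms induced by $q$ (this is exactly the subtlety flagged in the remark following the statement of Proposition \ref{pett}, and in Remark \ref{rsns}, that the dependence of $\pinatt{t,i}{k,r}\Xd$ on $k$ requires care). I would therefore devote the technical heart of the proof to checking that the diagram built from \wref{eqpstem} commutes with the exact couple of \wref{eqfibseq}, so that the composite in \wref{eqdrpo} is well-defined independently of the chosen representatives. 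Once that naturality is in place, the identification of $d^{r+1}$ follows formally from the construction of the spectral sequence via the exact couple, exactly as in the $r=1$ case of Proposition \ref{pett}, and the determination of the \Et{r+2}\ by the spiral $r$-system is then immediate from the definition of the truncated spectral sequence in Definition \ref{dtss}.
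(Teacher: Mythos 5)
Your proposal matches the paper's argument in all essentials: the paper also computes $d^{r+1}$ as a relation by a staircase of lifts through the $E^1$-term of the spiral exact couple, identifying each successive lift $e_{t-j}$ as an element of $\Omega\pinatt{t-j-1,j-1}{i,r}\Xd$ related to the previous one by the maps $s$ and to $\partial_{t,0}\q{i,r}(\bar a)$ by the composite in \eqref{eqdrpo}, with the window identifications justified (as in the proof of Proposition \ref{pett}) by the fact that $C_{n+1}$ and $Z_n$ are limits and hence commute with $\Po{r}$. Your framing as an induction on $r$ is just a repackaging of the paper's ``continuing in this way'' step, so the two proofs are essentially identical.
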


\begin{proof}
We naturally identify \w{\pinatt{t,k}{i,r}\Xd} 
with \w{\pinatt{t,k+s}{i,r-s}\Xd} for \w[,]{k\geq s} and similarly for
the maps in \wref[,]{eqpspiral} so the spiral \ww{(r-1)}-system embeds
in the spiral $r$-system (with an index shift).

Again we write out the \Et{1}\ of the spiral exact couple:
\begin{equation*}
\xymatrix@R=20pt{
& \pi_{r}C_{t-r}\Po{r}\Omega^{i}\Xd\hs \ar[r]^{(d_{0}^{t-r})_{\#}} & 
\hsm\pi_{r}Z_{t-r-1}\Po{r}\Omega^{i}\Xd \ar[r]^{(j_{t-r-1})_{\#}} 
\ar[rd]^{(j_{t-r-1}^{\Xd})_{\#}} \ar@{->>}[d]^{\hvart_{t-r-1}} & 
\pi_{r}C_{t-r-1}\Po{r}\Omega^{i}\Xd \\
&& \quad\hsnn \Omega\pinatt{t-r-1,r-1}{i,r}\Xd=\pinatt{t-r-1,r}{i,r}\Xd 
\ar@{^{(}->}[dd]^{\ell_{t-r-1,r}} 
\ar[rd]^>>>>>>>>>>>{h_{t-r-1,r}\q{i,r}}_{\cong} & 
Z_{t-r-1}\pi_{i+r}\Xd \ar@{^{(}->}[u]_{\inc} \ar@{->>}[d]^{\vart_{t-r-1}}\\ 
&&&\pi_{t-r-1}\pi_{i+r}\Xd\\
& \pi_{r-1}C_{t-r+1}\Po{r}\Omega^{i}\Xd\hs \ar[r]^{(d_{0}^{t-r+1})_{\#}} &
%\ar@{--}[d] & 
\pi_{r-1}Z_{t-r}\Po{r}\Omega^{i}\Xd \ar[r]^{(j_{t-r})_{\#}} &
%\ar@{--}[d] & 
\pi_{r-1}C_{t-r}\Po{r}\Omega^{i}\Xd \\
%\ar@{--}[d] \\
%
& \vdots & \vdots &\vdots& \\
& \pi_{2}C_{t-2}\Po{r}\Omega^{i}\Xd \ar[r]^{(d_{0}^{t-2})_{\#}} & 
\pi_{2}Z_{t-3}\Po{r}\Omega^{i}\Xd \ar[r]^{(j_{t-3})_{\#}} 
\ar[rd]^{(j_{t-3}^{\Xd})_{\#}} \ar@{->>}[d]^{\hvart_{t-3}} &
\pi_{2}C_{t-3}\Po{r}\Omega^{i}\Xd \\
&& \quad\hsnn \Omega\pinatt{t-3,1}{i,r}\Xd=\pinatt{t-3,2}{i,r}\Xd 
\ar@/_11pc/@<-13ex>[ddd]_{s_{t-3,1}\q{i,r}}
\ar@{^{(}->}[dd]^{\ell_{t-3,2}} \ar[rd]^>>>>>>>>>>>{h_{t-3,2}\q{i,r}} & 
Z_{t-3}\pi_{i+2}\Xd \ar@{^{(}->}[u]_{\inc} \ar@{->>}[d]^{\vart_{t-3}}\\ 
&&&\pi_{t-3}\pi_{i+2}\Xd\\
& \pi_{1}C_{t-1}\Po{r}\Omega^{i}\Xd \ar[r]^{(d_{0}^{t-1})_{\#}} & 
\pi_{1}Z_{t-2}\Po{r}\Omega^{i}\Xd \ar[r]^{(j_{t-2})_{\#}} 
\ar[rd]^{(j_{t-2}^{\Xd})_{\#}} \ar@{->>}[d]^{\hvart_{t-2}} & 
\pi_{1}C_{t-2}\Po{r}\Omega^{i}\Xd \\
&& \quad\hsnn \Omega\pinatt{t-2,0}{i,r}\Xd=\pinatt{t-2,1}{i,r}\Xd 
\ar@{^{(}->}[dd]^{\ell_{t-2,1}} \ar[rd]^>>>>>>>>>>>{h_{t-2,1}\q{i,r}} & 
Z_{t-2}\pi_{i+1}\Xd \ar@{^{(}->}[u]_{\inc} \ar@{->>}[d]^{\vart_{t-2}}\\ 
&&&\pi_{t-2}\pi_{i+1}\Xd\\
\pi_{0}Z_{t}\Po{r}\Omega^{i}\Xd \ar[r]^{(j_{t})_{\#}} 
\ar[rd]^{(j_{t}^{\Xd})_{\#}}  \ar@{->>}[d]^{\hvart_{t}} & 
\pi_{0}C_{t}\Po{r}\Omega^{i}\Xd \ar[r]^<<<<{(d_{0}^{t})_{\#}} & 
\pi_{0}Z_{t-1}\Po{r}\Omega^{i}\Xd \ar[r]^{(j_{t-1})_{\#}} & 
\pi_{0}C_{t-1}\Po{r}\Omega^{i}\Xd \to \dotsc\\
\pinatt{t,0}{i}\Xd \ar[rd]^{h_{t}\q{i,r}} & 
Z_{t}\pi_{i}\Xd \ar@{^{(}->}[u]_{\inc} \ar@{->>}[d]_{\vart_{t}} & & \\
&\pi_{t}\pi_{i}\Xd \ar[ruuuu]^>>>>>>>>>>>>>{\partial_{t,0}\q{i,r}}&&
}
\end{equation*}
\noindent The differential 
\w{d^{r+1}_{t,i}:E^{r+1}_{t,i}\to E^{r+1}_{t-r-1,i+r}} may then be
described as a ``relation'' (cf.\ \cite[\S 3.1]{BKanSQ}) in the usual way:

Given a class \w[,]{\alpha\in E^{r+1}_{t,i}} choose a representative
for it \w[.]{a\in E^{1}_{t,i}=\pi_{0}C_{t}\Po{r}\Omega^{i}\Xd} Since it
is a cycle for \w[,]{d^{1}_{t,i}=(j_{t-1})_{\#}\circ(d_{0}^{t})_{\#}}
it lies in \w{Z_{t}\pi_{i}\Xd} and thus represents an element 
\w[.]{\bar{a}\in\pi_{t}\pi_{i}\Xd=E^{2}_{t,i}} From the exactness of
the middle row of \wref{eqrowcol} we see that 
\w[,]{(d_{0}^{t})_{\#}(a)\in\Ker((j_{t-1})_{\#})=\Omega\pinatt{t-2,0}{i,r}\Xd}
and in fact  \w{(d_{0}^{t})_{\#}(a)} represents 
\w[.]{\partial_{t,0}\q{i,r}(\bar{a})} Since \w{\hvart_{t-2}} is
surjective, we can choose
\w{e_{t-2}\in\pi_{1}Z_{t-2}\Po{r}\Omega^{i}\Xd} mapping to
\w[.]{(d_{0}^{t})_{\#}(a)} Because  
\w[,]{d^{2}_{t,i}(\bar{a})=h_{t-2,1}\q{i,r}\circ\partial_{t,0}\q{i,r}(\bar{a})}
as in the proof of Proposition \ref{pett} (though \w{h_{t-2,1}\q{i,r}}
need no longer be an isomorphism!), we see that it is represented by 
\w[.]{(j_{t-2})_{\ast}(e_{t-2})} If \w[,]{r=1} we are done. Otherwise, 
we know that \w[,]{d^{2}_{t,i}(\bar{a})=0} so we can choose
\w{e_{t-2}} so that \w[,]{(j_{t-2})_{\ast}(e_{t-2})=0} using exactness
of the third column of of \wref[.]{eqrowcol} Again this implies that 
\w[,]{e_{t-2}\in\Ker((j_{t-2})_{\#})=\Omega\pinatt{t-3,1}{i,r}\Xd} and 
\w{d^{3}_{t,i}(\lra{a})} is represented by
\w[.]{h_{t-3,2}\q{i,r}(e_{t-2})} Moreover, we see from
\wref{eqrowcol} that
\w[,]{s_{t-3,1}\q{i,r}(e_{t-2})=\partial_{t,0}\q{i,r}(\bar{a})}
using the identification
\w[.]{\Omega\pinatt{t-2,0}{i,r}\Xd=\pinatt{t-2,1}{i,r}\Xd } 

Choosing a lift to
\w[,]{e_{t-3}\in\pi_{2}Z_{t-3}\Po{r}\Omega^{i}\Xd} we may assume that
\w[,]{(j_{t-3})_{\ast}(e_{t-3})=0} so 
\w{e_{t-3}\in\Omega\pinatt{t-4,2}{i,r}\Xd} and
\w[.]{s_{t-4,2}\q{i,r}(e_{t-3})=e_{t-2}} 
Continuing in this way, we finally reach
\w{e_{t-r-1}\in\Omega\pinatt{t-r-1,r-1}{i,r}\Xd} with 
\w[,]{s_{t-r-2,r}\q{i,r}(e_{t-r-1})=e_{t-r}} and so on,
and see that \w{d^{r+1}_{t,i}(\lra{a})} is represented by
\w[.]{h_{t-r-1,r}\q{i,r}(e_{t-r-1})} Since (as in the proof of
Proposition \ref{pett}) \w{h_{t-r-1,r}\q{i,r}} is an isomorphism,
we deduce that \w{d^{r+1}_{t,i}(\alpha)} is as in \wref[.]{eqdrpo}
\end{proof}

\begin{remark}\label{rert}
From the exactness of \wref{eqpspiral} we have 
\w[,]{\Image(\partial_{t,0}\q{i,r})=\Ker(s_{t-1,0}\q{i,r})} so the
image of \w{d_{t,i}^{r+1}} as described in \wref{eqdrpo} is 
\w[,]{\Ker(\sigma_{t,i}^{r+1})} where
\w[.]{\sigma_{t,i}^{r+1}:=(s_{t-1,0}\q{i,r})\circ(s_{t-2,1}\q{i,r})
\circ(s_{t-3,2}\q{i,r})\circ\dotsb\circ(s_{t-r,r-1}\q{i,r})}
Therefore, \w{E^{r+1}_{t+r-1,i+r}} embeds naturally in 
\w[.]{\Image(\sigma_{t,i}^{r+1})}
\end{remark}

%
%      Corollary:  E^r term of a simplicial r-stem
%
\begin{cor}\label{cert}
Every Reedy fibrant simplicial Postnikov $r$-stem has a well-defined
\ww{(r+1)}-truncated spiral spectral sequence. If
\w{\Qd=\PP[r]\Xd} for some simplicial space \w[,]{\Xd} this truncated
spectral sequence coincides with the \ww{(r+1)}-truncation of the
Bousfield-Friedlander spectral sequence for \w[.]{\Xd}  
\end{cor}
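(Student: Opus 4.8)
The plan is to observe that the construction in the proof of Theorem~\ref{tert} used the simplicial space $\Xd$ only through the exact couples of the truncated spaces $\Po{r}\Omega^{i}\Xd$, i.e.\ through the spiral $r$-system, and therefore applies verbatim to an arbitrary Reedy fibrant simplicial Postnikov $r$-stem. Concretely, for each internal degree $i\geq 0$ I would use the looped $i$-th window $\Omega^{i}\Qdnk{r+i}{i}$; since each window is Reedy fibrant by hypothesis and $\Omega^{i}$ (a right adjoint applied levelwise) commutes with the matching objects and preserves fibrations, this is again a Reedy fibrant simplicial space. Its cycle and chain objects then sit in the fibration sequences \eqref{eqfibseq} and yield a spiral exact couple whose associated spiral long exact sequence \eqref{eqspiral} is, by \eqref{eqtruncgps}, precisely the $i$-th window of the spiral $r$-system of $\Qd$.

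I would then define the pages $E^{2},\dots,E^{r+1}$ and the differentials $d^{2},\dots,d^{r+1}$ of the $(r+1)$-truncated spectral sequence by the relation calculus of the proof of Theorem~\ref{tert}, taking $E^{2}_{t,i}:=\pi_{t}\pi_{i}\Qd$ (well defined by \eqref{eqtruncgps}), $d^{2}$ as in \eqref{eqdtwodiff}, and $d^{j+1}$ as the depth-$j$ analogue of \eqref{eqdrpo}. The key structural point is the vanishing \eqref{eqvannatgp}: since $\pinatt{t,m}{i,r}\Qd=0$ for $m>r$, each defining zig-zag is trapped inside a single $r$-window and must terminate at internal degree $r$, where $h_{t-r-1,r}\q{i,r}$ is an isomorphism. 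As no single $r$-window contains both internal degrees $i$ and $i+r+1$, the stem determines nothing of $d^{r+2}$, so the spectral sequence is exactly $(r+1)$-truncated in the sense of Definition~\ref{dtss}.

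For the realizability clause, when $\Qd=\PP[r]\Xd$ one has $\Omega^{i}\Qdnk{r+i}{i}=\Po{r}\Omega^{i}\Xd$, and, exactly as in the proof of Proposition~\ref{pett}, the cycle and chain functors commute with $\Po{r}$ and induce isomorphisms on homotopy groups in degrees $\leq r$. Hence the exact couples above are the $r$-truncations of the ($\Omega^{i}$-shifted) spiral exact couple of $\Xd$, so the pages and differentials constructed agree with those of the Bousfield--Friedlander spectral sequence through $d^{r+1}$: for $d^{r+1}$ this is the content of Theorem~\ref{tert}, and for $d^{2},\dots,d^{r}$ it follows by induction on $r$, with Corollary~\ref{cett} (equivalently Proposition~\ref{pett}) as the base case.

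The hard part will be the well-definedness for a genuinely non-realizable $\Qd$: forming $E^{j+1}$ requires $d^{j}\circ d^{j}=0$ for $j\leq r$, and defining $d^{r+1}$ by \eqref{eqdrpo} requires independence of the auxiliary lifts $e_{t-2},\dots,e_{t-r-1}$, yet here there is no ambient simplicial space from which to inherit these identities. I expect to obtain them from the exactness of \eqref{eqpspiral} together with the inter-window maps induced by $q$ in \eqref{eqpstem}, which are part of the data of the spiral $r$-system; the cleanest route is induction on $r$, where the spiral $(r-1)$-system of $\bP{r-1}\Qd$ embeds with an index shift into the spiral $r$-system of $\Qd$, and the one extra window of depth of the $r$-stem supplies exactly the relation needed to close up $d^{r}$ (so that $d^{r}\circ d^{r}=0$) and to run the final step of the zig-zag defining $d^{r+1}$.
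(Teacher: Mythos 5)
Your proposal is correct and follows essentially the same route as the paper, which states this as an immediate corollary of Theorem~\ref{tert}: since the relation calculus in that proof (and in Proposition~\ref{pett}) refers only to the exact couples of the looped windows and the inter-window maps, i.e.\ to the spiral $r$-system, it applies verbatim to a non-realizable Reedy fibrant $r$-stem, and the realizability clause is then the Dwyer--Kan--Stover identification with the Bousfield--Friedlander spectral sequence. Your explicit attention to $d^{j}\circ d^{j}=0$ for $j\leq r$ and to independence of the auxiliary lifts is a point the paper leaves implicit, and your proposed treatment via the exactness of \eqref{eqpspiral} and the embedding of the spiral $(r-1)$-system is the right one.
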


Thus the bigraded homomorophism 
$$
d^{r+1}\circ d^{r+1}:E^{r}_{t,i}\to E^{r+1}_{t-2r-2,i+2r}\hsp 
(t\geq 2r+2, i\geq 0)
$$
\noindent serves as the first obstruction to the realizablity of the
simplicial Postnikov $r$-stem \w{\Qd} by a simplicial space \w[.]{\Xd}

%
%c4   A cosimplicial version
%
\sect{A cosimplicial version}
\label{ccsv}

There are actually four variants of the above spectral sequence which
we might consider, for a simplicial or cosimplicial object over
simplicial or cosimplicial sets.  The case of bicosimplicial sets is
in principle strictly dual to that of bisimplicial sets, but because
the category of cosimplicial \emph{sets} has no (known) useful model
category structure, we must restrict to bicosimplicial abelian groups
\wh or equivalently, ordinary double complexes. Thus the main new case
of interest is that of cosimplicial simplicial sets, or
\emph{cosimplicial spaces}.

\begin{mysubsection}[\label{ssscs}]{The spectral sequence of a cosimplicial space}
If \w{\Xu\in c\Sa} is a fibrant cosimplicial pointed space with total
space \w[,]{\Tot\Xu} there are various constructions for the homotopy
spectral sequence of \w[:]{\Xu} 

\begin{enumerate}
\renewcommand{\labelenumi}{(\alph{enumi})}
\item Using the tower of fibrations for \w{(\Tot_{n}\Xu)_{n=0}^{\infty}} 
(cf.\ \cite[X,\S 6]{BKanH}). 
\item Using ``relations'' on the normalized cochains
\w{N^{n}\pi_{t}\Xu:=\pi_{t}X^{n}\cap\Ker(s^{0})\cap\dotsc\cap\Ker(s^{n-1})}
(cf.\ \cite[\S 7]{BKanSQ}).
\item Using a cofibration sequence dualizing \wref{eqfibseq} 
(cf.\ \cite[\S 3]{RectS}).
\end{enumerate}

Bousfield and Kan showed that the result is essentially unique 
(see \cite{BKanSQ}). Since the main ingredient needed for to define
the spiral exact couple is the diagram \wref[,]{eqrowcol} we use the
first approach:
\end{mysubsection}

\begin{defn}\label{dncgps}
For any Reedy fibrant cosimplicial pointed space \w[,]{\Xu\in c\Sa} 
consider the fibration sequence
\begin{myeq}[\label{eqfibtot}]
F_{n}\Xu~\xra{j_{n}}~\Tot_{n}\Xu~\xra{p_{n}}~\Tot_{n-1}\Xu~,
\end{myeq}
\noindent where \w{\Tot_{n}\Xu:=\map_{c\Sa}(\sk{n}\bDel,\Xu)} and the
fibration \w{p_{n}} is induced by the inclusion of cosimplicial spaces 
\w[.]{\sk{n-1}\bDel\hra\sk{n}\bDel} 

The cokernel of \w{(j_{n})_{\#}:\pis F_{n}\Xu\hra\pis\Tot_{n}\Xu} is
called  the $n$-th \emph{natural (graded) cohomotopy group} of \w[,]{\Xu}
and denoted by \w[.]{\punat{\ast}{n}\Xu}
\end{defn}

\begin{remark}\label{rncgps}
We may identify \w{F_{n}\Xu} with the looped normalized cochain object
\w[,]{\Omega^{n}N^{n}\Xu} where
\begin{myeq}[\label{eqnch}]
N^{n}\Xu~:=~X^{n}\cap\Ker(s^{0})\cap\dotsc\cap\Ker(s^{n-1})~,
\end{myeq}
\noindent and \w{\pis N^{n}\Xu} with \w{N^{n}\pis\Xu} (see 
\cite[X, Proposition 6.3]{BKanH}). 

Moreover, the composite
$$
\pisp\Omega^{n}N^{n}\Xu\cong\pisp F_{n}\Xu~\xra{(j_{n})_{\#}}~
\pisp\Tot_{n}\Xu~\xra{\partial_{n}}~
\pis F_{n+1}\Xu\cong\pis\Omega^{n+1}N^{n+1}\Xu
$$
\noindent \noindent (where \w{\partial_{n}} is the connecting
homomotphism for the \wref[),]{eqfibtot} may then be identified with
the differential
\begin{myeq}[\label{eqdelta}]
\delta^{n}:=\sum_{i=0}^{n}(-1)^{i}d^{i}~:N^{n}\pis\Xu\to N^{n+1}\pis\Xu~,
\end{myeq}
\noindent for the normalized cochain complex \w[,]{N^{\ast}\pis\Xu} so that 
\begin{myeq}[\label{eqcett}]
\Ker(\delta^{n})/\Cok(\delta^{n+1})\cong\pi^{n}\pis\Xu
\end{myeq}
\noindent (cf.\ \cite[X, \S 7.2]{BKanH}). 
\end{remark}

%
%      Proposition:  cosimplicial spiral LES
%
\begin{prop}\label{pcsles}
For any pointed cosimplicial space \w{\Xu} there is a natural
\emph{spiral} long exact sequence:
\begin{myeq}[\label{eqcspiral}]
\begin{split}
\ldots~\to~\Omega\punat{\ast}{n-1}\Xu&~\xra{s^{n}}~\punat{\ast}{n}\Xu~\xra{h^{n}}~
\pi^{n}\pis\Xu~\xra{\partial^{n}}~\Omega\punat{\ast}{n-2}\Xu\\ 
&~\xra{s^{n-1}}~\punat{\ast}{n-1}\Xu~\to~\ldots~\to~
\punat{\ast}{0}\Xu~\xra{\cong}~\pi^{0}\pis\Xu 
\end{split}
\end{myeq}
\end{prop}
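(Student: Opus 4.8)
The plan is to dualize the construction of the simplicial spiral sequence in \S\ref{scsles}, replacing the fibration sequence \wref{eqfibseq} with the tower-of-fibrations sequence \wref{eqfibtot}. The key structural fact, recorded in Remark~\ref{rncgps}, is that the fiber $F_{n}\Xu$ is identified with the looped normalized cochain object $\Omega^{n}N^{n}\Xu$, and that $\pis N^{n}\Xu\cong N^{n}\pis\Xu$. First I would apply $\pis$ to \wref{eqfibtot} to obtain a long exact sequence
$$
\dotsb\to\pisp\Tot_{n-1}\Xu\xra{\partial_{n}}\pis F_{n}\Xu
\xra{(j_{n})_{\#}}\pis\Tot_{n}\Xu\xra{(p_{n})_{\#}}\pis\Tot_{n-1}\Xu\to\dotsb,
$$
and by definition of $\punat{\ast}{n}\Xu$ as $\Cok(j_{n})_{\#}$ this realizes $\Omega\punat{\ast}{n-1}\Xu$ as a subquotient of $\pis\Tot_{n-1}\Xu$, exactly as $\Omega\pin_{n}\Xd$ arose as $\Ker(j^{\Xd}_{n+1})_{\#}$ in the simplicial case.

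Next I would assemble the dual of the commutative diagram \wref{eqrowcol}: a diagram with exact rows and columns whose middle row carries the map $(p_{n})_{\#}$ factoring through the cocycles $Z^{n}\pis\Xu\subseteq N^{n}\pis\Xu$, and whose boundary objects are the images of the cochain differential $\delta^{n}$ of \wref{eqdelta}. Here the crucial input is the identification in Remark~\ref{rncgps} of the composite $(j_{n})_{\#}\circ\partial_{n}$ (for the tower \wref{eqfibtot}) with $\delta^{n}:N^{n}\pis\Xu\to N^{n+1}\pis\Xu$, which plays the role that $(\bd^{X_{n+1}})_{\#}$ played in \wref{eqpartial}. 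From this diagram the three maps $h^{n}:\punat{\ast}{n}\Xu\to\pi^{n}\pis\Xu$ (the canonical comparison to the cohomotopy of the cochain complex via \wref{eqcett}), $s^{n}:\Omega\punat{\ast}{n-1}\Xu\to\punat{\ast}{n}\Xu$ (the composite of the inclusion of the kernel with the quotient map), and $\partial^{n}:\pi^{n}\pis\Xu\to\Omega\punat{\ast}{n-2}\Xu$ (induced by the cochain differential, landing in the appropriate kernel by exactness) are all defined by reading off the same positions in the diagram as their simplicial analogues $h_{n}$, $s_{n}$, $\partial_{n}$.

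The exactness of \wref{eqcspiral} is then a purely diagrammatic consequence of the exactness of the rows and columns of this dual diagram, obtained by the same braiding/zig-zag argument that produces \wref{eqspiral} from \wref{eqrowcol}; this is essentially formal once the diagram is in place. The base case $\punat{\ast}{0}\Xu\xra{\cong}\pi^{0}\pis\Xu$ follows because $\Tot_{0}\Xu\simeq X^{0}$ and $N^{0}\pis\Xu=\pis X^{0}$, so the comparison map $h^{0}$ is an isomorphism, just as $h_{0}$ was in \S\ref{scsles}. The main obstacle I anticipate is bookkeeping of signs and of the loop-degree shifts: the fiber is $\Omega^{n}N^{n}\Xu$ rather than a single loop, so the connecting map $\partial_{n}$ for the tower \wref{eqfibtot} shifts internal degree in a way that must be matched carefully against the $\Omega$'s appearing in the statement, and the alternating sign in $\delta^{n}$ of \wref{eqdelta} must be reconciled with the connecting homomorphisms. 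Once Remark~\ref{rncgps} is invoked to pin down these identifications, the rest is dual to \S\ref{scsles} and requires no new ideas.
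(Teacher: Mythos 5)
Your proposal matches the paper's proof: the paper likewise reduces to a Reedy fibrant $\Xu$, forms the dual of diagram \eqref{eqrowcol} from the tower of fibrations \eqref{eqfibtot} using the identifications of Remark \ref{rncgps}, and then constructs $h^{n}$, $s^{n}$, $\partial^{n}$ and proves exactness ``precisely as in \S \ref{scsles}.'' The only points to tidy are that you should invoke a fibrant replacement explicitly (the statement is for \emph{any} pointed cosimplicial space, while \eqref{eqfibtot} needs Reedy fibrancy) and that the composite identified with $\delta^{n}$ is $\partial_{n}\circ(j_{n})_{\#}$, not $(j_{n})_{\#}\circ\partial_{n}$.
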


\begin{proof}
By choosing a fibrant replacement in the model category of
cosimplicial simplicial sets defined in \cite[X, \S 5]{BKanH}, if
necessary, we may assume that \w{\Xu} is Reedy fibrant. 
We then obtain a commutative diagram as in \wref{eqrowcol} with exact
rows and columns:  
\mydiagram[\label{eqcrowcol}]{
& 0 \ar[d] & 0 \ar[d] & 0 \ar[d] & & \\ 
0 \ar[r] & \Ker(j_{n})_{\ast}~ \ar@{^{(}->}[r] \ar@{^{(}->}[d] & 
B^{n+1}\Xu \ar@{^{(}->}[d]
\ar@{->>}[r]^>>>>>{(j_{n})_{\ast}} & B^{n+1}\pis\Xu\ar[r] \ar@{^{(}->}[d] & 
0\ar[d] & \\
0 \ar[r] & \Omega\punat{\ast}{n-1}\Xu~ \ar@{^{(}->}[r]^{\ell_{n-1}} 
\ar@{->>}[d] \ar[rd]^{s_{n}} & 
\pis \Tot_{n}\Xu \ar[d]^{\hvart_{n}} \ar[r]^{(j_{n}^{\Xu})_{\#}} & 
Z^{n}\pis\Xu \ar@{->>}[r] \ar[d]^{\vart_{n}} & \Cok h^{n} \ar[r] \ar[d]^{=} & 0 \\
0 \ar[r] & \Ker h^{n}~ \ar@{^{(}->}[r] \dto & 
\punat{\ast}{n}\Xu \ar[d] \ar[r]^{h_{n}} & 
\pi^{n}\pis\Xu \ar@{->>}[r] \ar[d] & \Cok h^{n}\ar[r] \ar[d] & 0 \\
 & 0 & 0 & 0 & 0 &
}
\noindent in which 
\w{B^{n+1}\Xu:=\Image(j_{n+1})_{\#}\subseteq\pis\Tot_{n}\Xu} and 
\w{B^{n+1}\pis\Xu:=\Image(\delta^{n+1})=
\Image(\partial_{n+1}\circ(j_{n+1})_{\#})} 
are the respective coboundary objects. 

The construction of the maps \w[,]{h^{n}} \w[,]{s^{n}}and \w[,]{\partial^{n}}
and the proof of the exactness of \wref[,]{eqcspiral} are then precisely
as in \S \ref{scsles}. 
\end{proof}

\begin{defn}\label{dcrsys}
The \emph{spiral $n$-system} of a pointed cosimplicial space 
\w{\Xu\in c\Sa} is defined to be the collection of long exact 
sequences \wref{eqcspiral} for the Postnikov $n$-stem functor
\w{\PP[n]} applied to \w[,]{\Xu} one for each $k$-window of
\w[.]{\PP[n]\Xu} 

As in Definition \ref{dsns}, this may actually be defined for a
cosimplicial Postnikov $n$-stem \w[,]{\Pu} not necessarily realizable
as \w[.]{\Pu=\PP[n]\Xu}
\end{defn}

By construction, the homotopy spectral sequence of a (fibrant)
cosimplicial space \w[,]{\Xu} obtained as in \wref[,]{ssscs} is
associated to the spiral exact couple \wref[.]{eqcspiral}
The proofs of Proposition \ref{pett} and Theorem \ref{tert} use only
the description of the spiral exact couple for \w{\Xd} derived from
\wref[,]{eqcrowcol} so by using \wref{eqcrowcol} instead we can prove
their analogues in the cosimplicial case, and show:

%
%      Theorem:  cosimplicial E_r term and truncated spiral LES
%
\begin{thm}\label{tcert}
The $E_{r+2}$-term of the homotopy spectral sequence for a cosimplicial
space \w{\Xu} is determined by the spiral $r$-system of \w[.]{\Xu} 
\end{thm}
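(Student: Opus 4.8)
The statement is the cosimplicial analogue of Theorem \ref{tert}, and the authors have essentially told us how to prove it: ``by using \wref{eqcrowcol} instead'' of \wref{eqrowcol}. So the plan is to transport the entire argument of Theorem \ref{tert} across the simplicial/cosimplicial dictionary. First I would observe that the spiral exact couple in the cosimplicial setting is built from exactly the same shape of diagram (four exact rows and columns, with the connecting maps $s^{n}$, $h^{n}$, $\partial^{n}$) as in the simplicial case; this is the content of Proposition \ref{pcsles} together with the diagram \wref{eqcrowcol}. The homotopy spectral sequence of $\Xu$ is, by the remark following Definition \ref{dcrsys}, associated to this spiral exact couple, so it suffices to describe its $E_{r+2}$-term purely in terms of the maps appearing in the spiral $r$-system.

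\begin{proof}
The homotopy spectral sequence of \w{\Xu} is associated to the spiral
exact couple \wref[,]{eqcspiral} whose underlying diagram
\wref{eqcrowcol} has exactly the same formal shape \wh four exact rows
and columns, with connecting maps \w{s^{n}} \w[,]{h^{n}} and
\w{\partial^{n}} \wh as the diagram \wref{eqrowcol} used in the
simplicial case. We apply the Postnikov $r$-stem functor \w{\PP[r]}
dimensionwise to \w[,]{\Xu} obtaining for each $k$-window the
associated spiral long exact sequence \wref[.]{eqcspiral} Exactly as in
Remark \ref{rsns}, \wref{eqcett} and the exactness of \wref{eqcspiral}
show that \w{\punatt{n}{t,i}\Xu=0} for \w[,]{i>r} so the spiral
$r$-system captures all the nonvanishing data through internal degree
$r$.

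Since the only ingredient used in the proofs of Proposition
\ref{pett} and Theorem \ref{tert} is the formal structure of the
spiral exact couple derived from \wref[,]{eqrowcol} the same argument
applies verbatim with \wref{eqcrowcol} in its place. Writing out the
\Et{1}\ of the cosimplicial spiral exact couple (the dual of the large
diagram in the proof of Theorem \ref{tert}, with \w{\Tot_{n}\Xu} and
\w{N^{n}\pis\Xu} replacing \w{Z_{n}\Xd} and \w[)]{Z_{n}\pis\Xd} and
running the same diagram chase, one finds that the differential
\w{d^{r+1}} is described as a ``relation'' built from the maps
\w[,]{\partial^{t}} \w[,]{s^{n}} and the isomorphisms \w{h^{n}} of the
top rows of the various windows, where as before \w{h_{t-r-1,r}} is an
isomorphism. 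Thus the \Et{r+2}\ is determined by the spiral $r$-system
of \w[.]{\Xu}
\end{proof}

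The main obstacle is purely bookkeeping rather than conceptual: one must verify that the index conventions dualize correctly \wh in particular that the identification \w{\punatt{n}{t,i}\Xu=0} for \w{i>r} (the cosimplicial analogue of \wref[)]{eqvannatgp} holds, and that the loop/suspension shifts in \w{F_{n}\Xu\cong\Omega^{n}N^{n}\Xu} line up with the internal-degree shifts so that the maps \w{s^{n}} and \w{h^{n}} occupy the correct windows. Once the dictionary of Remark \ref{rncgps} is in hand, no genuinely new argument is required.
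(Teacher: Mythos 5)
Your proposal matches the paper's own argument: the paper gives no separate proof of Theorem \ref{tcert}, but justifies it in the preceding paragraph by noting that the proofs of Proposition \ref{pett} and Theorem \ref{tert} use only the formal structure of the spiral exact couple coming from the diagram of exact rows and columns, so substituting \wref{eqcrowcol} for \wref{eqrowcol} yields the cosimplicial analogues. Your additional bookkeeping (the vanishing of the natural cohomotopy groups above internal degree $r$ and the identification of \w{F_{n}\Xu} with \w[)]{\Omega^{n}N^{n}\Xu} is consistent with Remarks \ref{rsns} and \ref{rncgps} and does not change the route.
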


An analogue of Corollary \ref{cert} also holds, as well as:

%
%      Proposition:  cosimplicial E_3 term 
%
\begin{prop}\label{pcett}
The differential \w{d_{2}^{t,i}:E_{2}^{t,i}\to E_{2}^{t+2,i+1}} may be 
identified with 
\w[.]{\partial^{t}\li{i,1}:\pi^{t}\pi_{i}\Xu\to\Omega\punatt{t+2,0}{i}\Xu}
\end{prop}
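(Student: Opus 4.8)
The plan is to prove Proposition \ref{pcett} by a direct dualization of the argument for Proposition \ref{pett}, specialized to the first differential. The whole point of the setup in \S\ref{ccsv} is that the cosimplicial spiral exact couple is built from the diagram \wref{eqcrowcol}, which is formally identical to \wref{eqrowcol}, so the identifications made in the proof of Proposition \ref{pett} transfer verbatim (with arrows reversed and the internal degree shifting up rather than down). Concretely, I would first record that because \w{n=1} throughout, the spiral $1$-system of \w{\Xu} is nontrivial only in internal degrees \w{i=0,1}, exactly as in the two-row display in the proof of Proposition \ref{pett}, so that \w{h^{t}\li{k,1}:\punatt{t,1}{k}\Xu\to\pi^{t}\punA{k}{1}\Xu} is an isomorphism and the top row lets me identify \w{\Omega\punatt{t,0}{k}\Xu} with \w{\pi^{t}\pi_{k+1}\Xu}.

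**The core identification.**
Next I would trace the \Et{2}\ differential \w{d_{2}^{t,i}} through the exact couple associated to the fibration sequence \wref{eqfibtot}. As in \S\ref{sssss}, the homotopy spectral sequence of \w{\Xu} is the one associated to the spiral exact couple coming from \wref[,]{eqcrowcol} so \w{d_{2}^{t,i}} is the composite of the edge maps in that couple. The key step is to read off from \wref{eqcrowcol} and the definition of the connecting map \w{\partial^{n}} that \w{d_{2}^{t,i}} factors as
$$
\pi^{t}\pi_{i}\Xu~\cong~\pi^{t}\pi_{0}\li{i,1}\Xu~\xra{\partial^{t}\li{i,0}}~
\Omega\punatt{t+2,0}{i}\Xu~=~\punatt{t+2,1}{i}\Xu~\xra{h^{t}}~
\pi^{t+2}\pi_{1}\li{i,1}\Xu~\cong~\pi^{t+2}\pi_{i+1}\Xu~,
$$
the exact dual of \wref[.]{eqdtwodiff} Since \w{h^{t}} is an isomorphism in the relevant internal degrees (by the first step), \w{d_{2}^{t,i}} is identified with \w{\partial^{t}\li{i,1}} precisely as claimed. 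I would note that the identification \w{\Omega\punatt{t+2,0}{i}\Xu=\punatt{t+2,1}{i}\Xu} is the cosimplicial analogue of the loop/internal-degree-shift identification used throughout \S\ref{scsles}, now realized via the looping isomorphism \w{F_{n}\Xu\cong\Omega^{n}N^{n}\Xu} of Remark \ref{rncgps}.

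**The main obstacle.**
The routine part is the bookkeeping of indices: in the cosimplicial world the cohomotopical degree increases under the differential, the connecting homomorphism for \wref{eqfibtot} raises the total degree, and one must check that the signs and the direction of the loop-shift come out consistent with \wref[.]{eqcett} The step I expect to require the most care is confirming that the factorization of \w{d_{2}^{t,i}} through \w{\partial^{t}} is the \emph{same} as the differential in the Bousfield--Kan spectral sequence \wref{eqcett} and not merely an abstractly isomorphic map; this is exactly where the identification of \w{\pis F_{n}\Xu} with \w{N^{n}\pis\Xu} and of the connecting map with the cochain differential \w{\delta^{n}} of \wref{eqdelta} is needed, since it guarantees that the \Et{2}\ computed from the spiral couple is genuinely \w{\Ker\delta^{n}/\operatorname{Im}\delta^{n-1}} and that its first differential agrees with \w{\partial^{t}\li{i,1}}. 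Once that compatibility is in hand, the proposition follows by transporting the proof of Proposition \ref{pett} through \wref[.]{eqcrowcol}
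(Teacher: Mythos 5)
Your proposal is correct and follows essentially the same route as the paper: the paper itself gives no separate proof of Proposition \ref{pcett}, but justifies it (together with Theorem \ref{tcert}) by observing that the proof of Proposition \ref{pett} uses only the spiral exact couple derived from \wref[,]{eqrowcol} so substituting \wref{eqcrowcol} dualizes the argument verbatim \wh which is exactly the transfer you carry out, including the factorization of \w{d_{2}^{t,i}} through the connecting map followed by the isomorphism $h$ in internal degree $1$. (Only a cosmetic point: the final isomorphism in your displayed composite should carry the index \w{t+2} rather than $t$.)
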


\begin{examples}\label{egert}
As noted in the introduction, many commonly used spectral sequences
arise as the spiral spectral sequence of an appropriate
(co)simplicial space, so Theorems \ref{tert} and \ref{tcert} allow us
to extract their $E^{r}$- or $E_{r}$-terms from the appropriate spiral
systems. For instance:
 
\begin{enumerate}
\renewcommand{\labelenumi}{(\alph{enumi})}
\item Segal's homology spectral sequence (cf.\ \cite{SegC}), the van Kampen
  spectral sequence (cf.\ \cite{StoV}), and the Hurewicz spectral
  sequence (cf.\ \cite{BlaH}) are constructed using bisimplicial sets. 
\item The unstable Adams spectral sequences of \cite{BCKQRSclM,BKanS}
  and \cite[\S 4]{BCMillU}, Rector's version of the Eilenberg-Moore
  spectral sequence (cf.\ \cite{RectS}), and Anderson's generalization
  of the latter  (cf.\ \cite{AndG}) are all associated to cosimplicial spaces.
\item The usual construction of the stable Adams spectral sequence for
  \w{\pis^{s}X\otimes\bZ/p} (cf.\ \cite[\S 3]{AdSS}) uses a tower of
  (co)fibrations, rather than a cosimplicial space, but when $X$ is 
  finite dimensional, it agrees in a range with the unstable version
  for \w[,]{\Sigma^{N}X} so Theorem \ref{tcert} applies stably, too. 
\end{enumerate}
\end{examples}

\end{document}